\newcommand{\cal}[1]{\mathcal{#1}}
\theoremstyle{plain}
\newtheorem{theo}{Theorem}
\newtheorem{lemma}{Lemma}[section]
\newtheorem{theorem}[lemma]{Theorem}  
\newtheorem{proposition}[lemma]{Proposition}
\newtheorem{corollary}[lemma]{Corollary}
\theoremstyle{definition}
\newtheorem*{defi}{Definition}
\newtheorem{definition}[lemma]{Definition}
\let\egthree=\phi
\let\phi=\varphi
\let\varphi=\egthree
\begin{document}
\title[Asymptotic dimension and disk graphs I]
{Asymptotic dimension and the disk graph I}
\author{Ursula Hamenst\"adt}
\thanks{Partially supported by 
ERC Grant "Moduli"\\
AMS subject classification:57M99}
\date{October 7, 2018}

%\maketitle

\begin{abstract}
For a 3-manifold $M$ and a subsurface $X$ of the boundary of $M$
with empty or incompressible boundary 
we use surgery to identify a graph whose
vertices are disks with boundary in $X$ and 
which is quasi-isometrically embedded in the curve
graph of $X$. \end{abstract}

\maketitle

%\setcounter{tocdepth}{10}
%\tableofcontents

\section{Introduction}

Consider an oriented 3-manifold $M$ and a subsurface $X$ of the 
boundary of $M$. We require that the boundary of $X$ either is empty,
or it is incompressible in $M$.
We also require that the Euler characteristic of $X$
is negative. The prototypical example is a \emph{handlebody} of genus
$g\geq 2$, i.e.
a compact three-dimensional manifold which can
be realized as a closed regular neighborhood in $\mathbb{R}^3$
of an embedded bouquet of $g$ circles. Its boundary
is a closed oriented surface of genus $g$. 

The \emph{disk graph} ${\cal D\cal G}(X)$ of $(M,X)$ is the metric graph whose 
vertices are isotopy classes of properly embedded disks in 
$M$ with boundary in $X$ and 
where two such disks are connected by an edge of length one
if they can be realized disjointly. Assigning to a disk its boundary
then defines an embedding of the disk graph into the 
\emph{curve graph} ${\cal C\cal G}(X)$ of $X$. 

The curve graph ${\cal C\cal G}(X)$ is a locally infinite 
geodesic metric graph which is hyperbolic in the sense of Gromov \cite{MM99}. 
The disk graph ${\cal D\cal G}(X)$ is a \emph{quasi-convex} subset of 
${\cal C\cal G}(X)$ \cite{MM04}. This means that there exists a number
$c>1$ with the following property. For any two points 
$x,y\in {\cal D\cal G}(X)$, there exists a path in ${\cal D\cal G}(X)$ which 
connects $x$ to $y$ and which is contained in the $c$-neighborhood of
a geodesic in ${\cal C\cal G}(X)$ connecting $x$ to $y$. 

As ${\cal C\cal G}(X)$ is a locally infinite, this does not imply that 
the inclusion ${\cal D\cal G}(X)\to {\cal C\cal G}(X)$ is 
a quasi-isometric embedding. Indeed, as was discovered by
Masur and Schleimer \cite{MS13}, this is not the case. 
Namely, in the terminology of their paper, the disk graph 
of a handlebody of genus $g\geq 2$ has
\emph{holes} consisting of convex subsets of infinite diameter
whose images in ${\cal C\cal G}(X)$ have uniformly bounded diameter.
Nevertheless, the main result of 
\cite{MS13} shows that the disk graph of the handlebody is hyperbolic.
Furthermore, somewhat indirectly, Masur and Schleimer  
describe how to "fill" the
holes and, by adding edges to ${\cal D\cal G}(X)$,
to construct a graph whose vertices are disks and 
which is quasi-isometrically embedded in the curve graph.

The main purpose of this article is to define such a graph explicitly and
to give a purely combinatorial proof that it embeds
quasi-isometrically into the curve graph. This construction
is used in \cite{H16} to give an alternative proof of hyperbolicity of 
the disk graph and zo determine its Gromov boundary. In \cite{H17}
we use the finer structure of the disk graph established along the way 
to show that its asymptotic dimension is finite.

A  construction which is closer to the viewpoint of
Masur and Schlei\-mer is due to Ma. In the article
\cite{Ma14} one also finds an interpretation of some of
our results using the viewpoint of 
of Masur and Schleimer.

To introduce the graph we are interested in, call
a simple closed curve 
$c$ on $X$
\emph{diskbusting} if $c$ has an essential 
intersection with the boundary of every disk.

Define an \emph{$I$-bundle generator} for $X$ to be
a diskbusting simple closed
curve $c$ on $X$ with the following property.
There is a compact surface $F$ with 
a distinguished boundary component $\alpha\in \partial F$,
and there is a homeomorphism of the orientable $I$-bundle
${\cal J}(F)$ 
over $F$ into $M$ which maps $\alpha$ to $c$
and which maps the union of the horizontal boundary of 
$F$ with the $I$-bundle over $\alpha$ onto the 
complement in $X$ of a tubular neighborhood of the boundary of $X$.
%Such an $I$-bundle generator $c$ 
%exists if and only if $g=2n$ is even (see the remark in Section 2).
%or there is a non-orientable $I$-bundle ${\cal I}(F)$
%over a non-orientable closed surface $F$ and there if 
%a homeomorphism of ${\cal I}(F)$ onto $H$ which maps
%a simple closed orientation preserving curve to $c$.
%The curve $c$ is separating if and only the surface $F$ is orientable.
%and decomposes
%$\partial H$ into two surfaces of genus $g/2=n$ with
%connected boundary. The base surface $F$ of the
%$I$-bundle can naturally be identified with one of the two 
%components of $\partial H-c$.
%The handlebody group preserves
%the set of $I$-bundle generators. 
%We remark that the $I$-bundles
%over non-orientable surfaces which appear
%in the geometric description of the
%disk graph \cite{MS13} arise from compression bodies
%and corresponds to other types of "holes" for the disk 
%graph which do not play a role in our context. 

%If $\partial H$ contains a single spot then 
%an $I$-bundle generator for $H$ is defined to be a
%simple closed curve $c$ whose 
%projection to the handlebody $H_0$ obtained from 
%$H$ by removing the spot is an
%$I$-bundle generator for $H_0$. 

\begin{defi}\label{super}
The \emph{super-conducting disk graph} 
is the graph ${\cal S\cal D\cal G}(X)$ 
whose vertices are isotopy classes of essential 
disks with boundary in $X$ and where two vertices 
$D_1,D_2$ are connected by an edge of length one 
if and only if one of the following two possibilities holds.
\begin{enumerate}
\item There is an essential simple closed curve on $X$ 
which can be realized disjointly from both
$\partial D_1,\partial D_2$.
\item There is an $I$-bundle generator $c$ for $X$ which
intersects both $\partial D_1,\partial D_2$ in 
precisely two points.
\end{enumerate}
\end{defi}

Since the distance in the curve graph ${\cal C\cal G}(X)$ of $X$
between two simple closed curves which intersect in two points 
does not exceed  $3$ \cite{MM99}, 
the natural vertex inclusion extends to a coarse $6$-Lipschitz map
${\cal S\cal D\cal G}(X)\to {\cal C\cal G}(X)$. We show

\begin{theo}\label{hyperbolic}
The natural vertex inclusion extends to a quasi-isometric embedding 
${\cal S\cal D\cal G}(X)\to {\cal C\cal G}(X)$.
\end{theo}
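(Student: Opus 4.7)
The plan is to establish the nontrivial direction of the inequality: since the natural inclusion is already coarsely $6$-Lipschitz, it suffices to find constants $L, L'$ with
\[d_{\mathcal{SDG}}(D_1, D_2) \leq L \cdot d_{\mathcal{CG}}(\partial D_1, \partial D_2) + L'\]
for all essential disks $D_1, D_2$ with boundary in $X$. I will fix a $\mathcal{CG}$-geodesic $c_0 = \partial D_1, c_1, \ldots, c_n = \partial D_2$ and build an $\mathcal{SDG}$-path of length $O(n)$ that tracks it.

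The scheme is to associate to each $c_i$ an essential disk $E_i$, with $E_0 = D_1$ and $E_n = D_2$, and to show that consecutive $E_i, E_{i+1}$ are at uniformly bounded $\mathcal{SDG}$-distance. If $c_i$ is not diskbusting, pick $E_i$ so that $\partial E_i$ is disjoint from $c_i$; this is possible by the definition of diskbusting. In this situation $c_i$ itself is an essential simple closed curve disjoint from $\partial E_i$, and so whenever $c_i, c_{i+1}$ are both non-diskbusting one can chain $E_i$ to $E_{i+1}$ across the bounded-diameter set of disks whose boundary avoids $c_i \cup c_{i+1}$, each step witnessed by condition~(1) of Definition~\ref{super}.

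The interesting case is when some intermediate $c_i$ is diskbusting, since then no disk has boundary disjoint from $c_i$. Here I would instead produce $E_i$ by iterated outermost-disk surgery against $c_i$ and argue, via a combinatorial analysis of the surgery data, that (a) up to bounded error $c_i$ must be an $I$-bundle generator for $X$, and (b) the surgery can be arranged so that $|\partial E_i \cap c_i| = 2$. Once (a) and (b) are in place, all disks $E_j$ so produced with $|\partial E_j \cap c_i| = 2$ are pairwise $\mathcal{SDG}$-adjacent by condition~(2), which collapses the whole Masur--Schleimer ``hole'' around $c_i$ to an $\mathcal{SDG}$-clique.

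I expect (a) and (b) to be the main obstacle: recognising an intermediate diskbusting curve on a $\mathcal{CG}$-geodesic combinatorially as (coarsely) an $I$-bundle generator, and reconstructing the compact surface $F$ together with the embedding $\mathcal{J}(F) \hookrightarrow M$ from the intersection pattern of surgery arcs on $X \setminus c_i$. This is the purely combinatorial replacement of the indirect hole-filling argument of \cite{MS13}. Once this is available, concatenating the $O(1)$-length $\mathcal{SDG}$-steps between consecutive $E_i$ yields a total $\mathcal{SDG}$-path of length $O(n)$, which gives the required bound.
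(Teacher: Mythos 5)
Your reduction to bounding $d_{\cal S}(E_i,E_{i+1})$ along a curve-graph geodesic contains two genuine gaps, and the central one is not just a missing proof but a misdirected claim. Step (a) is false as stated: a diskbusting curve occurring as an interior vertex of a geodesic between two disk boundaries need not be an $I$-bundle generator, nor coarsely close to one --- diskbusting curves are generic, while $I$-bundle generators are very special (they require an embedded $I$-bundle ${\cal J}(F)$ whose horizontal boundary fills all of $X$ up to annuli). Moreover, condition (2) in the definition of the super-conducting disk graph demands an honest $I$-bundle generator meeting both boundaries in exactly two points, so a curve that is a generator only ``up to bounded error'' produces no edge; and (b) fails as well, since there are diskbusting curves whose geometric intersection with every disk boundary is arbitrarily large, so no amount of outermost surgery can arrange $\vert\partial E_i\cap c_i\vert=2$. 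In the paper the $I$-bundle is not read off from a single curve at all: it is produced from a \emph{pair} of disks whose surgery stalls, namely when $D-E$ and $E-D$ each have exactly two outer components and $\partial D,\partial E$ fill $X$ (Lemma \ref{twocomponent}); the generator arises as the intersection with $X$ of the fixed set of the canonical involution of that $I$-bundle, and this is the only place where edges of type (2) enter.

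Your ``easy'' case also does not close: for disjoint non-diskbusting $c_i,c_{i+1}$ there need not exist any disk missing $c_i\cup c_{i+1}$ (the union of two non-diskbusting curves can be diskbusting), and a disk disjoint from $c_i$ and a disk disjoint from $c_{i+1}$ need not be at uniformly bounded $d_{\cal S}$-distance --- deducing such bounds from coarse curve-graph data is precisely what the theorem asserts, so it cannot be assumed. The paper's route avoids tracking a curve-graph geodesic altogether: it runs a nested surgery path from $D$ towards $E$ together with the associated one-switch train tracks (Proposition \ref{nestedcarried}), embeds these into a splitting and shifting sequence, and uses the wide-pair and gap-distance machinery of Section \ref{distanceinthe} (Corollary \ref{retract}) to show that the number of non-wide gaps along the sequence is coarsely bounded by $d_{\cal C\cal G}(\partial D,\partial E)$; across each non-wide gap, nested subsurfaces filled by a non-filling carried curve yield a bounded family of curves, each meeting the relevant disk boundaries in a uniformly bounded number of points, and Proposition \ref{distanceinter2} (if some essential curve meets $\partial D$ and $\partial E$ in at most $k$ points then $d_{\cal S}(D,E)\leq 2k+4$) converts this into the bound quadratic in $\vert\chi(X)\vert$. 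If you want to salvage your scheme, the minimal missing input is a statement of the type of Proposition \ref{distanceinter2} together with a mechanism producing, for each consecutive pair of your disks, a single curve with uniformly bounded intersection with both boundaries; at a diskbusting vertex no such curve is available in your setup, and that is exactly where the work of the paper lies.
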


The constants for the quasi-isometric
embeddings are bounded from above by 
an explicit quadratic polynomial in the Euler characteristic 
of $X$.

%In contrast, if $g$ is even
%then the inclusion 
%${\cal E\cal D\cal G}\to{\cal C\cal G}$ is \emph{not} a quasi-isometric
%embedding. 

The requirement that the boundary of the surface $X$ is incompressible in $M$
is essential for Theorem \ref{hyperbolic}. In the statement of the following 
result, we tacitly assume that the graph ${\cal D\cal G}(X)$ is not trivial.

\begin{theo}\label{spot}
If $X$ is a subsurface of the boundary of $M$ of genus $g\geq 2$, 
with a single compressible boundary component, then 
the graph ${\cal D\cal G}(X)$ 
is not a quasi-convex subset of the curve graph.  
\end{theo}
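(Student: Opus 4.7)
The plan is to produce, for every prospective quasi-convexity constant $c > 0$, a pair of disks $D_1, D_2 \in {\cal D\cal G}(X)$ and a vertex $\gamma$ on some ${\cal C\cal G}(X)$-geodesic between $D_1$ and $D_2$ with $d_{{\cal C\cal G}(X)}(\gamma, {\cal D\cal G}(X)) > c$; this immediately contradicts $c$-quasi-convexity. The compressing disk $E$ for $\partial X$ drives the obstruction. Because $\partial E$ is isotopic to $\partial X$ it is peripheral in $X$, so $E$ itself is not a vertex of ${\cal D\cal G}(X)$; yet its very existence is what breaks the Masur--Minsky inductive surgery proof of quasi-convexity. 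A surgery of a non-disk curve $\gamma$ against a nearby disk can produce a curve parallel to $\partial X$, and such curves are missing from ${\cal C\cal G}(X)$, so the surgery induction has no vertex to land on.

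Concretely I would fix a pseudo-Anosov $\phi$ of $X$ preserving $\partial X$ setwise, which exists because $g \geq 2$, and acting loxodromically on ${\cal C\cal G}(X)$. I would choose $\phi$ not extending to a homeomorphism of $M$ preserving $X$; this is possible since the handlebody-type subgroup of $\operatorname{Mcg}(X)$ has infinite index. Pick a non-peripheral simple closed curve $\gamma_0$ on $X$ disjoint from a collar of $\partial X$ and set $\gamma_n = \phi^n(\gamma_0)$. Next I would produce disks $D_1, D_2 \in {\cal D\cal G}(X)$ whose subsurface projections to an annular subsurface $Y \subset X$ adapted to $\phi$ lie at opposite extremes of a $\phi^n$-controlled range, so that by the Masur--Minsky bounded geodesic image theorem every ${\cal C\cal G}(X)$-geodesic from $D_1$ to $D_2$ passes within bounded distance of $\gamma_n$ for some specific $n$.

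The hard part will be showing that $d_{{\cal C\cal G}(X)}(\gamma_n, D) \to \infty$ as $n \to \infty$ uniformly in $D \in {\cal D\cal G}(X)$, not merely for the particular $D_1, D_2$ above. The plan is to argue by contradiction: a disk $D$ at bounded distance from $\gamma_n$ would, via iterated Masur--Minsky surgery against nearby disks, produce a disk at bounded distance from $\gamma_0$; but in the compressible case the surgery repeatedly produces peripheral curves, which are not vertices of ${\cal C\cal G}(X)$, forcing detours through ${\cal C\cal G}(X) \setminus {\cal D\cal G}(X)$ of length proportional to $n$. This is precisely the step where compressibility of $\partial X$ is essential: for incompressible $\partial X$ the surgery never produces peripheral curves, the induction closes, and one recovers the Masur--Minsky quasi-convexity result cited in the introduction.
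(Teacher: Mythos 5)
There is a genuine gap, and it sits exactly where you locate ``the hard part.'' Your overall target (two disks whose connecting geodesic in ${\cal C\cal G}(X)$ passes through a point far from ${\cal D\cal G}(X)$) is the right one, but neither half of your construction is carried out, and the half where compressibility is supposed to enter rests on an incorrect mechanism. The claim that in the compressible case ``the surgery has no vertex to land on'' and ``repeatedly produces peripheral curves, forcing detours of length proportional to $n$'' is not right: with a \emph{single} compressible boundary component, for any outer component at least one of the two disks produced by simple surgery is essential, so the surgery induction does go through and even gives $d_{\cal D}(D,E)\leq \iota(\partial D,\partial E)/2+1$ (this is Lemma \ref{firstestimate0} of the paper). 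So the disk graph is connected and surgery-controlled; the failure of quasi-convexity cannot be extracted from a breakdown of the Masur--Minsky surgery scheme, and in any case ``this proof no longer works'' is not a proof of the negation. Your other ingredients are also unsupported: choosing $\phi$ merely so that it does not extend to $M$ does not give $d_{{\cal C\cal G}(X)}(\phi^n\gamma_0,{\cal D\cal G}(X))\to\infty$ (a non-extending pseudo-Anosov can perfectly well have its attracting lamination in the closure of the set of disk boundaries); and the proposed use of the bounded geodesic image theorem with an annular subsurface ``adapted to $\phi$'' would only force geodesics near the fixed core curve of that annulus, not near $\gamma_n=\phi^n(\gamma_0)$, and you have not produced any disks $D_1,D_2$ realizing the required large projections. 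Note also that if your argument worked as stated it would nowhere use compressibility and would equally ``disprove'' the Masur--Minsky quasi-convexity theorem in the incompressible case, which signals that the essential input is missing.

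For comparison, the paper's proof supplies exactly the two ingredients you are missing. First, to get a pseudo-Anosov whose axis-translates are uniformly far from the disk set, it caps off $\partial X$ to a closed surface $X_0$, where the disk set of $(M_0,X_0)$ \emph{is} quasi-convex, and invokes Masur's theorem that the closure of the diskbounding curves in ${\cal P\cal M\cal L}$ is nowhere dense to choose $\phi$ with both fixed laminations outside that closure; then $d_{{\cal C\cal G}(X_0)}(\phi^k\zeta,\,\text{disk set})\to\infty$, and the simplicial, disk-preserving surjection $\Pi:{\cal C\cal G}(X)\to{\cal C\cal G}(X_0)$ transports this farness back to $X$. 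Second, to manufacture pairs of diskbounding curves in $X$ whose geodesics are forced through that far region, it uses the Birman exact sequence: conjugates $\phi^k\circ\Psi(\gamma)\circ\phi^{-k}$ of a point-pushing pseudo-Anosov applied to a diskbounding curve $\beta$ missing the puncture produce curves $\beta_k$ which are far from $\beta$ in ${\cal C\cal G}(X)$ with connecting geodesics fellow-traveling $\phi^k$-translates of a quasi-axis, yet $\beta_k$ is still diskbounding because capping off undoes the point-push and, with a single compressible boundary component, a curve in $X$ is diskbounding if and only if its image in $X_0$ is (Lemma \ref{treebundle}). That equivalence is the precise point where compressibility of $\partial X$ is used, and it has no analogue in your sketch; without it (or some substitute) you have no supply of disks $D_1,D_2$ whose geodesics leave the neighborhood of the disk set.
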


%In \cite{HH18} these results are used to 
% show that the Dehn function of the
%handlebody group is exponential.

\bigskip
\noindent
{\bf Organization:} In Section \ref{distance} we
use surgery of disks to relate the distance 
in the superconducting disk graph
to intersection numbers of boundary curves.
 
In Section \ref{distanceinthe} we give an 
effective estimate of the distance in the curve graph using
train tracks. The results in this section are independent of
the rest of the article. 

Together with a 
construction of \cite{MM04}, this 
is used in 
Section \ref{quasigeodesics} to show Theorem \ref{hyperbolic}. 
In Section \ref{gromov} we
identify the Gromov boundary of ${\cal S\cal D\cal G}(X)$
in the case $M$ is a handlebody of genus $g\geq 2$ and 
$X$ is its boundary surface. 
The proof of Theorem \ref{spot} is contained in 
Section \ref{disksforahandle}.

\bigskip

\noindent 
{\bf Acknowledgement:} I am indebted to Saul Schleimer for 
making me aware of a missing case 
in the surgery argument in Section \ref{distance} in 
a first draft of this paper and for 
sharing his insight in the disk graph with me. 
The results in Sections 2-4 of this article were obtained 
in summer 2010 while I
visited the University of California in Berkeley.
I am especially grateful to an anonymous referee who
suggested a considerable simpliciation of the proof of 
Lemma \ref{twocomponent} and for other useful comments,
including pointing out the reference \cite{Ma14}.

\section{Distance and intersection}\label{distance}

In this section we consider an
arbitrary oriented 3-manifold $M$ together with a compact oriented
subsurface $X$ of the boundary of $M$. The surface $X$ may have boundary
$\partial X$, but any boundary component of $X$ is supposed to 
be incompressible in $M$.

By a \emph{disk} we always mean an embedded 
essential disk in $M$
with boundary in $X$. 
As the boundary of $X$ is not diskbounding by assumption,
the boundary
$\partial D$ of such a disk is an essential curve in $X$. 
Two disks $D_1,D_2$ are in 
\emph{normal position} if their boundary circles
intersect in the minimal number of points and if 
every component of $D_1\cap
D_2$ is an embedded arc
in $D_1\cap D_2$ 
with endpoints in $\partial D_1
\cap \partial D_2$.
In the sequel we always assume that disks are in normal position;
this can be achieved by modifying one of the two disks with an
isotopy.

Let $D$ be any disk and let $E$ be a
disk which is not disjoint from $D$. 
A component $\alpha$ of $\partial E-D$ 
is called an \emph{outer arc} of $\partial E$ relative to $D$
if there is a component
$E^\prime$ of $E-D$ whose boundary is composed of $\alpha$
and an arc $\beta\subset D$. The interior of $\beta$
is contained in the interior of $D$. We call such a disk $E^\prime$
an \emph{outer component} of $E-D$. An outer component
of $E-D$ intersects $X$ in an outer arc $\alpha$
relative to $D$, and 
$\alpha$ 
intersects $\partial D$ in opposite directions
at its endpoints. 

For every disk $E$ which is not disjoint
from $D$ there are 
at least two distinct outer components $E^\prime,E^{\prime\prime}$
of $E-D$.
There may also be components of $\partial E-D$ which
leave and return to the same side of $D$ but which are
not outer arcs. An example of such a component 
is a  subarc of $\partial E$ which
is contained in the boundary of 
a rectangle component of $E-D$
leaving and returning to the same side of $D$. The boundary
of such a rectangle consists of 
two subarcs of $\partial E$ with endpoints 
on $\partial D$ which 
are homotopic  relative to $\partial D$, and two 
arcs contained in $D$.

\begin{figure}
\begin{center}
\psfrag{alpha}{$\alpha$}
\includegraphics[width=0.7\textwidth]{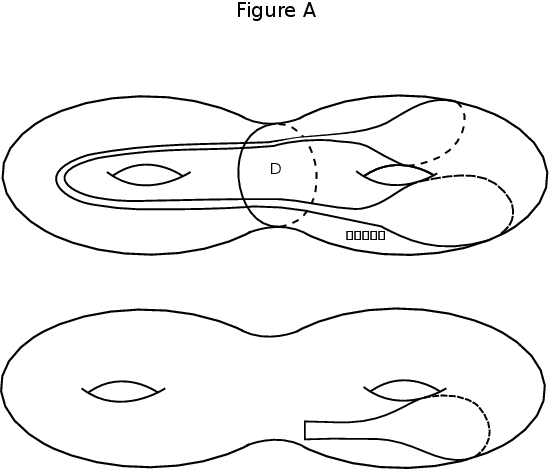}
\end{center}
\end{figure}

Let $E^\prime\subset E$ be an outer component of $E-D$ 
whose boundary is composed of an outer arc $\alpha$ and a 
subarc $\beta=E^\prime\cap D$ of $D$. 
The arc $\beta$
 decomposes the disk $D$ into two half-disks 
$P_1,P_2$. The unions
$Q_1=E^\prime\cup P_1$ and $Q_2=E^\prime\cup P_2$ are embedded disks in 
$M$ which up to isotopy are disjoint and disjoint from $D$.
For $i=1,2$ we say that the disk $Q_i$ is obtained from $D$ by 
\emph{simple surgery} at the outer component
$E^\prime$ of $E-D$ (see e.g. \cite{S00} for 
this construction). Since $D,E$ are in minimal position, 
the disks $Q_1,Q_2$ are essential.

%We will also look at graphs of disks with some constraint
%on their boundaries. These graphs of disks are used in the 
%construction in \cite{H16}. To this end define a connected
%essential subsurface $X$ of the boundary $\partial H$ of $H$
%to be \emph{thick} if the following properties hold true.
%\begin{enumerate}
%\item Every disk intersects $X$.
%\item $X$ is filled by boundaries of disks.
%\end{enumerate}
%The boundary surface $\partial H$ of $H$ is thick. 
%Note that if the boundary $\partial D$ 
%of a disk $D$ is contained in $X$, then $\partial D$ is an 
%essential curve in $X$. Namely, a boundary component of $X$ 
%is not diskbounding by definition. 

%In this section we use simple surgery on disks to gain some understanding
%of the disk graph of $X\subset M$. 
%Since the boundary of a disk arising from a simple
%surgery of two disks with boundary in $X$ is again contained in $X$, 
%and it is essential in $X$ by the above remark,
%we will not distinguish the thick subsurface $X$ of $\partial H$ in our 
%notation but rather include some specific comments in case some
%extra care is necessary. 
%For each subsurface $X$ of $\partial M$ with incompressible boundary
In the introduction we defined two 
graphs of disks with boundary in $X$. 
%whose definition parallels the definition
%given in the introduction for $X=\partial H$. 
We called them 
\emph{disk graph} ${\cal D\cal G}(X)$
and \emph{superconducting disk graph}
${\cal S\cal D\cal G}(X)$, respectively. 

Each disk in $M$ with boundary in $X$
can be viewed as a vertex in the 
disk graph ${\cal D\cal G}(X)$
%the electrified disk graph ${\cal E\cal D\cal G}(X)$
and the superconducting
disk graph ${\cal S\cal D\cal G}(X)$ of $X$.
We will work with both graphs simultaneously.
Denote by $d_{\cal D}$ (or
$d_{\cal S}$) the distance in ${\cal D\cal G}(X)$ (or in 
${\cal E\cal D\cal G}(X)$ or in ${\cal S\cal D\cal G}(X)$). Note that for
any two disks $D,E$ we have
\[d_{\cal S}(D,E)\leq d_{\cal D}(D,E).\]

In the sequel we always assume that all curves and multicurves
on $X$ are essential. 
For two simple closed multicurves $c,d$ on $X$ let 
$\iota(c,d)$ be the geometric 
intersection number between $c,d$. 
The following lemma \cite{MM04} implies that the
graph ${\cal D\cal G}(X)$ is connected.
We provide the short proof for completeness.

\begin{lemma}\label{fellowtravel}
Let $D,E\subset M$ be any two
disks with boundary in $X$. Then $D$
can be connected to a disk $E^\prime$ 
with boundary in $X$ which is disjoint from 
$E$ by at most 
$\log_2(\iota(\partial D,\partial E)/2)$ simple surgeries.
In particular, 
\[d_{{\cal D}}(D,E)\leq \log_2(\iota(\partial D,\partial E)/2)+1.\]
\end{lemma}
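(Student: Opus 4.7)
The proof will proceed by induction on the intersection number $n=\iota(\partial D,\partial E)$. When $n=0$ the disks are already disjoint up to isotopy, so one takes $E'=D$ and no surgery is required. For $n>0$, the goal is to produce, by a single simple surgery on $D$, a disk $Q$ with $\iota(\partial Q,\partial E)\leq (n-2)/2$; iterating this halving step then yields the claimed logarithmic bound.

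To carry out the inductive step, I would use the existence of at least two outer components of $E-D$ recalled in the excerpt to pick an outer arc $\alpha$ of $\partial E$ relative to $D$, with associated arc $\beta=E^\prime\cap D$, where $E^\prime$ is the corresponding outer component. The arc $\beta$ cuts $D$ into two half-disks $P_1,P_2$, and the endpoints of $\beta$ are two of the $n$ points of $\partial D\cap\partial E$; writing $n_i$ for the number of remaining intersection points lying on $a_i:=\partial P_i\cap\partial D$, we have $n_1+n_2=n-2$. The simple surgery construction produces the disks $Q_i=E^\prime\cup P_i$, which are essential by the remark in the excerpt, and a small isotopy pushing $E^\prime$ off $D$ makes each $Q_i$ disjoint from $D$, so $D$ and $Q_i$ are joined by an edge of ${\cal D\cal G}(X)$. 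Since $\partial Q_i$ is composed of $\alpha\subset\partial E$ and $a_i\subset\partial D$, pushing $\alpha$ slightly off $\partial E$ gives the estimate $\iota(\partial Q_i,\partial E)\leq n_i$; choosing $i$ with $n_i\leq (n-2)/2$ then realises the halving.

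Iterating this step produces a chain of disks $D=Q^{(0)},Q^{(1)},\dots,Q^{(k)}$, each obtained from its predecessor by one simple surgery and each consecutive pair joined by an edge of ${\cal D\cal G}(X)$, with $\iota(\partial Q^{(k)},\partial E)=0$ after $k\leq \log_2(n/2)$ steps. Since $Q^{(k)}$ is then joined to $E$ by one more edge, the distance bound $d_{\cal D}(D,E)\leq \log_2(n/2)+1$ follows.

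The step that will require the most care is the intersection estimate $\iota(\partial Q_i,\partial E)\leq n_i$: one must verify that pushing $\alpha$ off $\partial E$ across the thin outer component $E^\prime$ does not introduce any new essential intersections with $\partial E$, so that the $n_i$ transverse intersections along $a_i$ genuinely bound the geometric intersection number after isotopy. Once this accounting is justified — and once one confirms that the pushed-off $Q_i$ is in normal position with respect to $E$ so the induction can be reapplied — the remainder of the argument is straightforward bookkeeping.
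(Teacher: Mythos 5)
Your proposal follows essentially the same argument as the paper: choose an outer component $E^\prime$ of $E-D$, surgere $D$ along it keeping the half-disk whose boundary arc meets $\partial E$ fewer times, observe that the resulting essential disk is disjoint from $D$ and has at most half as many intersections with $\partial E$, and iterate to get the $\log_2$ bound on the number of surgeries and hence on $d_{\cal D}(D,E)$. Your bookkeeping with $n_1+n_2=n-2$ is in fact slightly sharper than the paper's stated estimate $\iota(\partial E,\partial D^\prime)\leq\iota(\partial D,\partial E)/2$, but the method is identical.
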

\begin{proof} Let $D,E$ be two disks in normal position, with boundary
in $X$. Assume that 
$D,E$ are not disjoint. Then 
there is an outer component $E^\prime$ of $E-D$. 
The endpoints of the outer arc $\partial E^\prime\cap X$
decompose $\partial D$ into two arcs
$\beta_1,\beta_2$. Choose the arc with fewer intersections with
$\partial E$, say the arc $\beta_1$. 
The disk $D^\prime$  obtained by simple surgery
of $D$ at this component which contains $\beta_1$ in its boundary 
is essential, with boundary in $X$. 
Moreover, 
$D^\prime$ 
is disjoint from $D$, i.e. we have 
$d_{{\cal D}}(D^\prime,D)=1$,  and 
\begin{equation}\label{intersection}  
\iota(\partial E, \partial D^\prime)\leq 
\iota(\partial D,\partial E)/2.\end{equation}
The lemma now follows by induction on 
$\iota(\partial D,\partial E)$.
\end{proof}

Consider an oriented $I$-bundle ${\cal J}(F)$ over a compact
(not necessarily oriented) surface $F$ with (not necessarily connected)
boundary $\partial F$. The boundary $\partial {\cal J}(F)$ decomposes into 
the \emph{horizontal boundary} and the \emph{vertical boundary}. The vertical
boundary is the interior of the restriction of the $I$-bundle to $\partial F$ and 
consists of a collection of pairwise disjoint open incompressible annuli. 
The horizontal boundary is the complement of the vertical boundary
in $\partial {\cal J}(F)$. 

For a given boundary component $\alpha$ of $F$, the union of the horizontal
boundary of ${\cal J}(F)$ with the $I$-bundle over $\alpha$ is a compact
connected orientable surface $F_\alpha\subset \partial {\cal J}(F)$. 
The boundary of $F_\alpha$ is empty if and only if the boundary of $F$ is 
connected. If the boundary of $F$ is not connected then $F_\alpha$ is 
properly contained in the boundary $\partial {\cal J}(F)$ of ${\cal J}(F)$. 
The complement $\partial {\cal J}(F)-F_\alpha$ is a union of incompressible annuli.

\begin{definition}\label{diskbustingibundle}
An \emph{$I$-bundle generator} is an essential simple
closed curve $\gamma\subset X$
with the following property. There is a compact surface $F$ with 
non-empty boundary $\partial F$, there is a boundary component 
$\alpha$ of $\partial F$, and there is 
an orientation preserving  embedding $\Psi$ of the oriented 
$I$-bundle ${\cal J}(F)$ over $F$ into $M$ which maps $\alpha$ to $\gamma$
and which maps  $F_\alpha$ onto the complement in $X$ 
of a tubular neighborhood of 
the boundary $\partial X$ of $X$. 
%\item There is a nontrivial $I$-bundle ${\cal J}(F)$ over a
%closed non-orientable surface $F$ and there is a homeomorphism
%${\cal J}(F)\to H$ which maps an orientation preserving curve
%in $F$ to $\gamma$.
%\end{enumerate}
\end{definition}
We call the surface $F$ the \emph{base} of the $I$-bundle generated by $\gamma$.
Note that an $I$-bundle generator $\gamma$ is 
\emph{diskbusting}, i.e it intersects 
every disk in $M$ with boundary in $X$.

If $\gamma$ is a separating $I$-bundle generator in $X$ 
with base surface $F$ (i.e. $\gamma\subset X$ is a separating
simple closed curve which also is an $I$-bundle generator) 
then $F$ is orientable and the genus $g$ of $X$ is
even. 
Moreover, the $I$-bundle ${\cal J}(F)=F\times [0,1]$ is trivial.
The $I$-bundle over every essential arc in $F$
with endpoints in $\partial F$ is an embedded disk in $M$.
If $\gamma$ is a non-separating $I$-bundle generator in 
$X$ then the base $F$ of the $I$-bundle is 
non-orientable.

For each $I$-bundle ${\cal J}(F)$, 
there is an orientation reversing involution
$\Phi:{\cal J}(F)\to {\cal J}(F)$ which 
acts as
a reflection in the fiber. Up to isotopy, the $I$-bundle over
any essential arc $\beta$ on the surface $F$ 
with endpoints in the same boundary component 
$\alpha$ is a $\Phi$-invariant disk which 
intersects $\alpha$ in precisely two points.

If $D,E\subset M$ are disks in normal position 
then each component of $D-E$ is a disk.
Furthermore, each component of $D\cap E$ 
is a properly embedded arc in $D$ which
decomposes $D$ into two connected components. 
Therefore the graph
dual to the cell decomposition of $D$ whose two-cells are the
components of $D-E$ is a tree. If
$D-E$ only has two outer components then 
this tree is just a line segment.
The following lemma analyzes the case that this holds true for both
$D-E$ and $E-D$. For its formulation, we say that 
two simple closed curves $c,d$ \emph{fill} the surface $X$
if $c,d$ are contained in $X$ and if there is no
essential simple closed curve in $X$ which is disjoint from 
$c\cup d$. 

The proof of the following lemma uses a suggestion of a referee
which lead to a considerable simpliciation of the argument.

\begin{lemma}\label{twocomponent}
Let $D,E\subset M$ be disks in normal position 
with boundary in the surface $X$. 
If $D-E$ and $E-D$ only have two outer components 
and if $\partial D, \partial E$ fill $X$ 
then
there exists an $I$-bundle ${\cal J}(F)$ over a compact surface
$F$ and an embedding $\Psi:{\cal J}(F)\to M$ with the
following properties. There is a boundary component 
$\alpha$ of $F$ such that $\Psi(\alpha)$ is an $I$-bundle generator 
in $X$, and $D,E$ are the images under $\Psi$ of 
$I$-bundles over embedded arcs  
$\delta,\beta$ in $F$ with endpoints on $\alpha$. 
\end{lemma}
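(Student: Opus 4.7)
The plan is to construct the $I$-bundle directly from the intersection data, by exhibiting an orientation-reversing involution $\Phi$ on a regular neighborhood of $D\cup E\cup X$ in $M$ and taking $F$ to be the quotient.

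First I would analyze the combinatorial structure of $D\cap E$. The hypothesis that $D-E$ has only two outer components forces the dual tree of the decomposition of $D$ by the arcs $a_1,\dots,a_n$ of $D\cap E$ to be a path, so these arcs appear as parallel chords in $D$; the same holds in $E$. A short analysis of how the endpoints of the $a_i$ interleave on $\partial D$ versus on $\partial E$, using the nested pairing induced by each chain, shows that the two cyclic orderings agree up to reversal, so the $a_i$ may be labeled to occur in matching order on $\partial D$ and $\partial E$.

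Second, I would construct the involution $\Phi$. On $D$, define $\Phi|_D$ to be the reflection fixing the arc transverse to the chords $a_i$ through their midpoints; this reverses each $a_i$. Do the same on $E$. The two definitions agree on $D\cap E$, since both act as reversal on each $a_i$. In a tubular neighborhood $U_i\cong B^2\times I$ of $a_i$, chosen so that $D\cap U_i$ and $E\cap U_i$ appear as perpendicular strips with $a_i=\{0\}\times I$, extend to the local involution $(q,t)\mapsto(q,1-t)$, which is orientation-reversing with two-dimensional fixed set $B^2\times\{1/2\}$. The matching cyclic orders established above let one choose framings of the normal bundles of $D$ and $E$ so that the local involutions agree on overlaps, giving $\Phi$ on a regular neighborhood $N$ of $D\cup E$. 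The filling hypothesis then allows the extension of $\Phi$ across the complementary disks of $\partial D\cup\partial E$ in $X$ (each extended as a $\pi$-rotation) and across the annular collars of $\partial X$, producing $\Phi$ on a regular neighborhood $\widetilde N$ of $D\cup E\cup X$ cut off near $\partial X$.

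Finally, taking $F:=\widetilde N/\Phi$ gives $\widetilde N$ the structure of an oriented $I$-bundle $\mathcal{J}(F)$, with $\Psi:\widetilde N\hookrightarrow M$ the inclusion; the disks $D$ and $E$ project to embedded arcs $\delta,\beta\subset F$ whose endpoints lie on a distinguished boundary component $\alpha$ of $F$, and $c=\Psi(\alpha)$ intersects each of $\partial D,\partial E$ in exactly two points, so it is the desired $I$-bundle generator (the filling hypothesis guarantees $F_\alpha=X\setminus N(\partial X)$). The main obstacle is the global consistency of the local involutions across neighboring strips of $D, E$ and across the complementary regions in $X$: the matching cyclic orders from the first step are precisely what makes the framings align coherently, and without the two-outer-component hypothesis on both sides the construction fails.
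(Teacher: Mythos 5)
Your first step contains a false claim, and your own summary says the construction leans on it. Consider the model situation the lemma is designed to detect: $D=\Psi(\delta\times I)$ and $E=\Psi(\beta\times I)$ for two arcs $\delta,\beta$ in the base $F$ with endpoints on $\alpha$. Then the components of $D\cap E$ are exactly the $I$-fibres over the points of $\delta\cap\beta$, the hypothesis holds automatically ($D-E$ and $E-D$ each have precisely two outer components, namely the pieces over the two end segments of $\delta$, resp.\ $\beta$), and the arcs of $D\cap E$ occur along $D$ in their order along $\delta$ but along $E$ in their order along $\beta$. These two orders are unrelated permutations in general (already for $F$ of genus two with one boundary component and filling arcs meeting in three points), so "the two cyclic orderings agree up to reversal" does not follow from the two-outer-component hypothesis, nor from filling. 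Since you state that this matching is "precisely what makes the framings align coherently", the argument as written rests on a false statement. In fact no such coherence condition is needed: the interval structure on $N(D)\cup N(E)$ can be built piece by piece, foliating each rectangle component of $D-E$ and $E-D$ by intervals with its two arcs of $D\cap E$ as leaves, and each outer component with its single arc of $D\cap E$ as a leaf; these foliations agree along $D\cap E$ with no global ordering condition.

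The extension step over $X$ is also incorrect, and the genuinely three-dimensional part of the proof is missing. The fibrewise reflection of an $I$-bundle whose union of horizontal boundary and vertical annulus over $\alpha$ is $X$ minus a collar has fixed set equal to the zero section, which meets $X$ precisely in the core circle $\gamma$ of the vertical annulus over $\alpha$; it cannot fix isolated interior points of $X$. Consequently the involution you must build exchanges the complementary polygons of $\partial D\cup\partial E$ in $X$ in pairs, except for the polygons crossed by $\gamma$, on which it acts as a reflection with one-dimensional fixed set --- never as a $\pi$-rotation of a single polygon. Producing that pairing forces you into the 3-manifold: one must show that each component $V$ of $M-(D\cup E)$ meeting a polygon $P$ is a 3-ball whose boundary consists of $P$, a second polygon $P'$ and fibred rectangles, and then extend the interval structure (equivalently, the involution interchanging $P$ and $P'$) across $V$; this is exactly where the filling hypothesis does its work. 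Extending $\Phi$ only across $X$ and collars of $\partial X$, as you propose, leaves a regular neighbourhood of $D\cup E\cup X$ which is the desired $I$-bundle with an open ball removed from the interior of each such $V$, and that is not an $I$-bundle over a compact surface. Finally, $F$ cannot be $\widetilde N/\Phi$, which is three-dimensional; the base surface is the fixed-point set of the fibrewise reflection (equivalently, the quotient of the horizontal boundary by the involution).
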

\begin{proof} Let $D,E$ be two disks in normal position, with boundary in 
the surface $X$. 
Assume that 
$D-E$ and $E-D$ only have two outer components.
Then each component of $D-E,E-D$ either is an
outer component or a rectangle, i.e. a disk whose boundary
consists of two 
components of $D\cap E$ and 
two arcs contained in $\partial D\subset X$ or 
$\partial E\subset X$, respectively. 
Assume 
that $\partial D, \partial E$ fill up $X$.
This means that $\partial X-({\partial D\cup \partial E})$ is 
a union of disks
and peripheral annuli about the boundary components of $X$. 

Choose tubular
neighborhoods $N(D),N(E)$ of $D,E$ in $M$ which are 
homeomorphic to an interval bundle over a disk and which 
intersect $X$ in an embedded
annulus. 
We may assume that the interiors 
$A(D),A(E)$ of these annuli are contained in 
the interior of $X$.
Then $\partial N(D)-A(D)$,
$\partial N(E)-A(E)$ is the union of two properly 
embedded disjoint disks in $M$ 
isotopic to $D,E$.
We may assume that $\partial N(D)-A(D)$ 
is in normal position with
respect to $\partial N(E)-A(E)$ and that  
\[S=\partial (N(D)\cup N(E))-(A(D)\cup A(E))\] is a 
compact surface with boundary which is properly
embedded in $M$. Since $M$ is assumed
to be oriented, the boundary $\partial (N(D)\cup N(E))$ 
of $N(D)\cup N(E)$ 
has an induced orientation which restricts to an 
orientation of $S$. 

Now note that $N(D)\cup N(E)$ has the structure of an interval
bundle over a surface with the property that 
each intersection component of $D\cap E$ is a fibre of this bundle.
Namely, for each outer component $C$ of $D-E$ or $E-D$ choose
two points in the interior of $\partial C\cap X$ so that 
the boundary of $C$ can be viewed as a rectangle, with one side 
$\rho$ the
component of $D\cap E$ contained in the boundary of $C$.
Foliate this rectangle in standard way by intervals so that
$\rho$ is a leaf of this foliation. Similarly, each component of 
$D-E$ or $E-D$ which is not an outer components 
contains two components of $D\cap E$ in its boundary, and it 
can be foliated into
intervals in such a way that the two components of 
$D\cap E$ in its boundary are leaves. This foliation of 
$D\cup E$ can naturally be extended to a foliation of 
$N(D)\cap N(E)$ by intervals. 
With the exception of a subarc of the boundary of an outer component, 
the leaves of this foliation intersect the boundary surface $X$ only
at their endpoints. 

By assumption, 
$\partial D\cup \partial E$ decompose $X$ into a 
union of polygons, i.e. disks bounded by finitely many
subarcs of $\partial D\cup \partial E$ and peripheral annuli. Such a polygon
$P$ is contained in the boundary of a component $V$ of
$M-(D\cup E)$. 
The intersection $\partial V\cap X$ has two connected components.
One of these components is the polygon $P$, the other component
$P^\prime$ 
either is a polygon component of $X-(\partial D\cup \partial E)$, or
it contains a boundary component of $X$.

The complement of $P\cup P^\prime$ in $\partial V$ is a finite collection
$W$ of fibred rectangles. The base of such a rectangle is an edge in 
the boundary $\partial P$ of $P$. The side of the rectangle opposite
to the base is an arc in the boundary of $P^\prime$. Since $P$ is 
a topological disk, this implies that the same holds true for 
$P^\prime$ and $V$ is a 3-ball.

As a consequence, each component $V$ of $H-(D\cup E)$ which contains
a polygonal component of $X-(\partial D\cup \partial E)$ in its
boundary is a ball whose boundary consists of $P$, a finite union
${\cal R}$ of fibred rectangles with pase $\partial P$ and a second
polygonal component $P^\prime$ of $X-(\partial D\cup \partial E)$.
The $I$-bundle structure on ${\cal R}$ naturally extends to 
an $I$-bundle structure on $V$. Therefore the union of 
$N(D)\cup N(E)$ with these components is an $I$-bundle whose 
boundary contains the complement of a small neighborhood of 
the boundary of $X$. The involution of the $I$-bundle preserves each
component $V$ of $M-(D\cup E)$ determined by a polygon in 
$X-(\partial D\cup \partial E)$, and it exchanges the 
two components of $V\cap X$.

Now note that by construction, $\partial D,\partial E$ intersects 
the fixed point set of the involution only at two points, and these
two points are contained in the interiors of the unique fibres of the
bundle which are subarcs of the two outer components of 
$D-E,E-D$, respectively. As the intersection of this
fixed point set with $X$ 
is the generator $\gamma$ of the $I$-bundle in the
sense defined above,  $\gamma$ has
all the properties stated in the Lemma. This completes the proof.
\end{proof}

We use Lemma \ref{twocomponent} to show 

\begin{proposition}\label{distanceinter2}
Let $D,E\subset M$ be essential disks with boundary in $X$.
If there is an essential simple closed curve $\alpha\subset X$ 
which intersects $\partial D,\partial E$ in at most 
$k\geq 1$ points 
then $d_{\cal S}(D,E)\leq 2k+4$. 
\end{proposition}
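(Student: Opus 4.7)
My plan is to induct on $N := \iota(\alpha,\partial D) + \iota(\alpha,\partial E) \le 2k$, establishing the stronger bound $d_{\cal S}(D,E) \le N+1$. The base case $N=0$ is immediate: $\alpha$ is an essential curve disjoint from both $\partial D$ and $\partial E$, so condition~(1) of Definition~\ref{super} yields $d_{\cal S}(D,E) \le 1$.

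For the inductive step I put $D$, $E$, and $\alpha$ in pairwise minimal position. If $D$ and $E$ are already disjoint, then $d_{\cal D}(D,E)\le 1$ and hence $d_{\cal S}(D,E)\le 1$. Otherwise both $D-E$ and $E-D$ have at least two outer components, and I distinguish two cases. In \emph{Case~A}, both $D-E$ and $E-D$ have exactly two outer components: if $\partial D\cup\partial E$ does not fill $X$, an essential curve in the complement of their union gives $d_{\cal S}(D,E)\le 1$ via condition~(1); if they do fill $X$, Lemma~\ref{twocomponent} produces an $I$-bundle generator meeting each of $\partial D, \partial E$ in exactly two points, and condition~(2) yields $d_{\cal S}(D,E)\le 1$.

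In \emph{Case~B}, say $E-D$ has $k_E\ge 3$ outer components. Write $m=\iota(\alpha,\partial D)$ and $n=\iota(\alpha,\partial E)$. The outer arcs of $E-D$ are pairwise disjoint subarcs of $\partial E$, so some outer component $E'$ has outer arc $\alpha_{E'}$ with $\iota(\alpha,\alpha_{E'})\le n/k_E\le n/3$. Simple surgery of $D$ at $E'$ produces disks $Q_1,Q_2$ disjoint from $D$ with $\partial Q_i=\delta_i\cup\alpha_{E'}$; since $\iota(\alpha,\delta_1)+\iota(\alpha,\delta_2)=m$, one of them satisfies $\iota(\alpha,\partial Q_i)\le m/2+n/3$, which is strictly less than $m$ as soon as $m>2n/3$. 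Symmetrically, surgery of $E$ at an outer component of $D-E$ (of which there are $k_D\ge 2$) produces a disk $R$ with $\iota(\alpha,\partial R)\le n/2+m/k_D$, strictly less than $n$ as soon as $m<nk_D/2$. A brief arithmetic check rules out the possibility that both conditions fail when $N\ge 1$: the joint inequalities $m\le 2n/3$ and $m\ge nk_D/2\ge n$ would force $n\le 2n/3$, hence $m=n=0$. Thus one of the two surgeries produces a disk at ${\cal S}{\cal D}{\cal G}$-distance one from $D$ or $E$ with strictly smaller total $\alpha$-intersection, and induction yields $d_{\cal S}(D,E)\le N+1\le 2k+1\le 2k+4$.

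The main technical obstacle will be the arithmetic book-keeping in Case~B: for every allowable combination of $k_D\ge 2$ and $k_E\ge 3$ and every pair $(m,n)$ with $m+n\ge 1$, one must check that at least one of the two candidate surgeries strictly decreases $N$ rather than leaving it unchanged. Lemma~\ref{twocomponent} is indispensable precisely for the residual configuration $k_D=k_E=2$ where no such surgery helps, and it is also where the restriction that $\alpha_{E'}\subset\partial E$ is a single outer arc among pairwise disjoint outer arcs, making the pigeonhole bounds $n/k_E$ and $m/k_D$ available, plays its essential role.
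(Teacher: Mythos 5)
Your proposal is correct and follows essentially the same route as the paper: the $p=q=2$ case is handled by Lemma \ref{twocomponent} and condition (2) of the definition, and otherwise a pigeonhole choice of outer component (using at least three outer components on one side and two on the other) yields a simple surgery replacing $D$ or $E$ by a disjoint disk whose total intersection with $\alpha$ strictly decreases, exactly as in the paper's dichotomy $j/2>j'/3$ versus $j/2\leq j'/3$. Your formulation as an induction on $N=\iota(\alpha,\partial D)+\iota(\alpha,\partial E)$ even gives the marginally sharper bound $N+1\leq 2k+1$, which of course implies the stated $2k+4$.
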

\begin{proof}
Let $D,E$ be essential disks in normal position as in the proposition
which are not disjoint.

Let $\alpha$ be an essential simple closed curve 
in $X$ which intersects both
$\partial D$ and $\partial E$ in at most $k\geq 1$ points.
We may assume that these intersection points are disjoint from
$\partial D\cap \partial E$. 

Let $p\geq 2$ (or $q\geq 2$) be the number
of outer components of $D-E$ (or of $E-D$).
If $p=2,q=2$ then 
Lemma \ref{twocomponent} shows that either
$d_{{\cal S}}(D,E)\leq 1$ (in the case that $\partial D,\partial E$ do not
fill up $X$) or 
$\partial D,\partial E$ intersect some 
$I$-bundle generator $\gamma\subset X$ in precisely two points, and we have 
$d_{\cal S}(D,E)=1$. 

Let $j\leq k,j^\prime\leq k$ be the number of intersection
points of $D,E$ with $\alpha$. As $d_{\cal S}(D,E)=1$ if
$j=j^\prime=0$ we may assume that $j+j^\prime\geq 1$. 
%If $\min\{j,j^\prime\}\leq 1$ then 
%$d_{{\cal E}}(D,E)\leq \log_2 k+3$ by Lemma \ref{distanceinter}.
Thus it suffices to show the following. 
If $\max\{p,q\}\geq 3$
%and $\min\{j,j^\prime\}\geq 2$ 
then there is a simple surgery 
transforming the pair $(D,E)$ to a pair $(D^\prime,E^\prime)$
with the following properties.
\begin{enumerate}
\item $D^\prime$ is disjoint from $D$, $E^\prime$ is disjoint from $E$.
\item Either $D=D^\prime$ or $E=E^\prime$.
\item The total number of intersections of $\alpha$ with 
$D^\prime\cup E^\prime$ is strictly smaller than $j+j^\prime$.
\end{enumerate}

To this end assume without loss of generality
that $q\geq 3$. If $j/2>j^\prime/3$ then 
choose an outer component $E_1$ of $E-D$ with at most
$j^\prime/3$ intersections with $\alpha$. This is possible
because $E-D$ has at least three outer components.
Let $D_1$ be the component of $D-E_1$ which 
intersects $\alpha$ in at most $j/2$ points.
Replace $D$ be the disk $D^\prime=D_1\cup E_1$ which is disjoint from $D$ and
has at most $j/2+j^\prime/3< j$ intersections with $\alpha$.

On the other hand, if $j/2\leq j^\prime/3$ then 
choose an outer component $D_1$ of $D-E$ with at most
$j/2$ intersections with $\alpha$. Let $E_1$ be the component of 
$E-D_1$ with at most $j^\prime/2$ intersections with
$\alpha$ and replace $E$ by the disk 
$E^\prime=E_1\cup D_1$ which is disjoint from 
$E$ and intersects $\alpha$ in at most
$j/2+j^\prime/2< j^\prime$ points. 

This is what we wanted to show. 
\end{proof}

{\bf Remark:} The arguments in this section use the fact
that every simple surgery of a disk at an outer component of another
disk yields an essential disk in $M$.
They are not valid for surfaces $X\subset \partial M$ with compressible
boundary.

\section{Distance in the curve graph}\label{distanceinthe}

The purpose of this section is to establish an 
estimate for  the distance in the curve graph 
of a compact oriented surface $X$ 
of genus $g\geq 0$ with 
$m\geq 0$ boundary components 
and $3g-3+m\geq 2$. This estimate
which will be
essential for a geometric description of the
superconducting disk graph. 

The curve graph of a compact oriented 
surface $X$ with boundary coincides with the curve graph
obtained from $X$ by replacing each boundary component by
a puncture. 
As considering surfaces with punctures rather than
bordered surfaces has advantages for our exposition,
we consider in the remainder of this section an arbitrary
closed oriented
surface $S$ from which 
a finite set of points  
have been deleted. This results in this section are independent 
from the rest of the paper.

The idea is 
to use 
\emph{train tracks} on $S$. 
We refer
to \cite{PH92} for all basic notions and constructions regarding
train tracks.

A train track $\eta$ (which may just be a 
simple closed curve) is \emph{carried}
by a train track $\tau$ 
if there is a map $F:S\to S$ of class $C^1$ 
which is homotopic to the identity, with $F(\eta)\subset \tau$ and 
such that the restriction of the differential $dF$ of $F$ to 
the tangent line of $\eta$ vanishes nowhere. Write
$\eta\prec\tau$ if $\eta$ is carried by $\tau$. If $\eta\prec \tau$ 
then the image of $\eta$ under a carrying map is a \emph{subtrack} 
of $\tau$ which does not depend on the choice of the carrying
map. Such a subtrack is a subgraph
of $\tau$ which is itself a train track. Write $\eta<\tau$ if 
$\eta$ is a subtrack of $\tau$.

A train track $\tau$ is called \emph{large} \cite{MM99} 
if each complementary component of $\tau$ is either
simply connected or a once punctured disk.
A simple closed curve $\eta$ carried by $\tau$ 
\emph{fills} $\tau$ if the image of $\eta$
under a carrying map is all of $\tau$.
A \emph{diagonal extension} of a large train track $\tau$
is a train track $\xi$ which can be obtained from $\tau$
by subdividing some complementary components which are
not trigons or once punctured monogons.

A \emph{trainpath} on $\tau$ is an immersion
$\rho:[k,\ell]\to\tau$ which maps every 
interval $[m,m+1]$ diffeomorphically onto a branch of $\tau$.
We say that $\rho$ is \emph{periodic} if $\rho(k)=\rho(\ell)$
and if the inward pointing tangent of $\rho$ at $\rho(k)$
equals the outward pointing tangent of $\rho$ at
$\rho(\ell)$.
Any simple closed curve carried by a train track $\tau$ defines
a periodic trainpath and a 
\emph{transverse measure} on $\tau$. The space of transverse
measures on $\tau$ is a cone in a finite dimensional real vector space.
Each of its extreme rays is spanned by 
a \emph{vertex cycle} which is a simple closed curve carried
by $\tau$.
A vertex cycle defines a periodic trainpath
which passes through every branch at most twice, in 
opposite direction (Lemma 2.2 of \cite{H06}, see also
\cite{Mo03}).

Let $\eta$ be a large train track. If $\eta\prec\tau$ then
$\tau$ is large as well. In particular, if 
$\eta^\prime<\eta$ is a large subtrack of $\eta$ and 
if $\xi$ is a diagonal extension of $\eta^\prime$, then
a carrying map $F:\eta\to \tau$ induces a carrying
map of $\eta^\prime$ onto a large subtrack 
$\tau^\prime$ of $\tau$,
and it induces a carrying map of $\xi$ onto a
diagonal extension of $\tau^\prime$.

\begin{definition}
A pair $\eta\prec\tau$ of large train tracks
is called \emph{wide} if 
every simple closed curve which is
carried by a diagonal extension of a large subtrack
of $\eta$ fills a diagonal extension of  
a large subtrack of $\tau$. 
\end{definition}

We have

\begin{lemma}\label{widecarry}
If $\sigma\prec\eta\prec\tau$ and if the pair
$\eta\prec\tau$ is wide then $\sigma\prec\tau$ is wide.
\end{lemma}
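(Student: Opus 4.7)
The plan is to read off the conclusion from the definition of wideness via transitivity of carrying. Fix $\sigma\prec\eta\prec\tau$ with $\eta\prec\tau$ wide, and let $c$ be an arbitrary simple closed curve carried by a diagonal extension $\xi$ of a large subtrack $\sigma^\prime$ of $\sigma$. By the definition of wideness for $\sigma\prec\tau$, it suffices to show that $c$ fills a diagonal extension of a large subtrack of $\tau$.

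First I would apply the observation stated in the paragraph immediately preceding the definition of ``wide'' to the carrying $\sigma\prec\eta$. A carrying map $F:\sigma\to\eta$ sends the large subtrack $\sigma^\prime$ onto a large subtrack $\eta^\prime$ of $\eta$, and induces a carrying map of $\xi$ onto a diagonal extension $\xi^\prime$ of $\eta^\prime$. Next, carrying is transitive: composing the carrying map $c\to\xi$ with the induced carrying map $\xi\to\xi^\prime$ gives a $C^1$ map homotopic to the identity whose differential does not vanish on the tangent line of $c$. Hence $c$ is carried by $\xi^\prime$, which is a diagonal extension of the large subtrack $\eta^\prime$ of $\eta$. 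Now the hypothesis that $\eta\prec\tau$ is wide applies to $c$ and yields that $c$ fills a diagonal extension of a large subtrack of $\tau$. Since $c$ was arbitrary, this is precisely the wideness of $\sigma\prec\tau$.

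The only point that requires attention is the transitivity of carrying and of the ``induced diagonal extension'' construction, but both are immediate from the chain rule applied to a composition of $C^1$ carrying maps together with the observation cited above. Thus there is no substantial obstacle; the lemma is essentially a formal consequence of the definitions together with the preliminary remark on how carrying respects large subtracks and diagonal extensions.
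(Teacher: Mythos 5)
Your proof is correct and follows essentially the same route as the paper: push the large subtrack $\sigma'$ and its diagonal extension $\xi$ forward under a carrying map $\sigma\to\eta$, observe the curve is then carried by the resulting diagonal extension of a large subtrack of $\eta$, and apply wideness of $\eta\prec\tau$. The paper's proof additionally records the images in $\tau$ of these tracks, but the argument is the same.
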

\begin{proof} 
Let $\sigma^\prime$ be a large subtrack of 
$\sigma$ and let $\xi$ be a diagonal extension of 
$\sigma^\prime$. Then the carrying map $\sigma\to \eta$
maps $\sigma^\prime$ onto a large subtrack
$\eta^\prime$ of $\eta$, and it maps
$\xi$ to a diagonal extension $\zeta$ of $\eta^\prime$.
Similarly, $\eta^\prime$ is mapped to a large subtrack
$\tau^\prime$ of $\tau$, and $\zeta$ is mapped to 
a diagonal extension $\rho$ of $\tau^\prime$.

A simple closed curve $\alpha$ carried by $\xi$
is carried by $\zeta$. 
In particular, since
$\eta\prec\tau$ is wide, $\alpha$ fills a large subtrack of 
$\rho$. 
From this the lemma follows.
\end{proof}

A \emph{splitting and shifting sequence} 
is a finite sequence $(\tau_i)_{0\leq i\leq n}$ of large train tracks so that
for each $i$, $\tau_{i+1}$ can be obtained from $\tau_{i}$
by a sequence of \emph{shifts} followed by a single \emph{split}.
We refer to p.119 of 
\cite{PH92} and p.192 of \cite{H06} for the definition of a split and
a shift of a train track on $S$. 
We allow the split to be a \emph{collision} (see p.119 of \cite{PH92}), 
i.e. a split followed by the removal of the diagonal of the split. 
Such a collision
reduces the
number of branches of the train track. Note that $\tau_{i+1}$ is carried by $\tau_{i}$
for all $i$ and the
pair $\tau_{i+1}\prec\tau_{i}$ is \emph{never} wide. Namely, 
the cone of transverse measures for $\eta$ maps via the carrying
map onto the subcone of the cone of transverse measures on $\tau$
obtained by intersecting the latter cone with a half-space. 
This implies that there exists
an extreme ray of the cone for $\eta$ which also is an extreme ray
for the cone for 
$\tau$, and such an extreme ray is spanned by a vertex cycle $c$ for $\eta$ which 
maps to a vertex cycle of $\tau$. However, vertex cycles do not fill large
subtracks (\cite{H06}, see also \cite{Mo03}).

For an essential simple closed curve $c$ on $S$ 
let $i(c)\in \{0,\dots,n\}$ be the largest number with the
following property. There is a large subtrack $\eta$ of 
$\tau_{i(c)}$ so that $c$ is carried by a diagonal extension 
$\xi$ of $\eta$ and fills $\xi$.  If no such number exists
then put $i(c)=0$. 
 
The curve graph  ${\cal C\cal G}$ of 
$S$ is the graph whose vertices are 
simple closed curves on $S$  and where two
such curves $c,d$
are connected by an edge of length one if they can be
realized disjointly. 
Define a projection
$P:{\cal C\cal G}\to (\tau_i)_{0\leq i\leq n}$ by 
\[P(c)=\tau_{i(c)}.\] 
Extend the map $P$ to the
edges of ${\cal C\cal G}$ by mapping an edge to 
the image of one of its endpoints.

\begin{lemma}\label{project}
Let $c,d$ be disjoint simple closed curves on $S$.
Assume that $P(c)=\tau_i$. If $\tau_i\prec\tau_j$ is wide
then $P(d)=\tau_s$ for some $s\geq j$.
\end{lemma}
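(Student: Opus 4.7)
The plan is to reduce the statement to the defining property of wideness. Concretely, the goal is to show that $d$ fills a diagonal extension of a large subtrack of $\tau_j$; by the definition of $i(d)$ this gives $i(d)\geq j$ and hence $P(d)=\tau_s$ for some $s\geq j$. Since $\tau_i\prec\tau_j$ is assumed wide, it suffices to prove that $d$ is \emph{carried} by a diagonal extension of some large subtrack of $\tau_i$, and then invoke wideness.

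First I would unpack the data furnished by $P(c)=\tau_i$. By the definition of $i(c)$ there exist a large subtrack $\eta<\tau_i$ and a diagonal extension $\xi$ of $\eta$ with $c\prec\xi$ and $c$ filling $\xi$. Since a diagonal extension of a large train track is again large, $\xi$ is large: every complementary component of $\xi$ is a simply connected disk or a once-punctured disk, bounded by finitely many cusps.

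The main step is to enlarge $\xi$ to a train track $\xi^\prime$ which is still a diagonal extension of $\eta$ and which carries the disjoint curve $d$. I would proceed as follows. Realize $c$ as a train route on $\xi$ that traverses every branch, and isotope $d$ into minimal position with respect to $\xi$. The hypothesis that $d$ is disjoint from $c$, together with the fact that $c$ uses every branch of $\xi$, restricts the arcs of $d$ cut off by $\xi$: each arc of $d\cap R$ in a complementary component $R$ of $\xi$ has its endpoints at cusps of $R$, or at the puncture of $R$ when $R$ is once-punctured. In other words, each such arc is a combinatorial diagonal of $R$. Smoothing the union $\xi\cup d$ at the relevant cusps produces a train track $\xi^\prime$ which differs from $\xi$ only by the insertion of diagonals into some complementary components, so $\xi^\prime$ is a diagonal extension of $\xi$. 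Because a composition of two diagonal extensions is again a diagonal extension, $\xi^\prime$ is a diagonal extension of the large subtrack $\eta<\tau_i$, and by construction $d\prec\xi^\prime$.

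Having placed $d$ inside a diagonal extension of a large subtrack of $\tau_i$, the wideness hypothesis applies and produces a large subtrack $\eta^\prime<\tau_j$ together with a diagonal extension of $\eta^\prime$ which is filled by $d$. By the definition of $i(d)$ this is exactly the statement $i(d)\geq j$, and therefore $P(d)=\tau_s$ for some $s\geq j$, as required.

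The main obstacle is the middle step: verifying that disjointness of $d$ from a curve $c$ that fills $\xi$ really does force each arc of $d$ inside a complementary region of $\xi$ to be a combinatorial diagonal, and that smoothing produces a genuine train track rather than an arbitrary 1-complex. The most delicate point is the treatment of once-punctured complementary components, where an arc of $d$ may terminate at the puncture instead of at a cusp; this still yields a diagonal of an admissible type in the sense of diagonal extensions, but the verification is where the careful train-track bookkeeping enters.
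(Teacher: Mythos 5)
Your argument is correct and follows essentially the same route as the paper: unpack $P(c)=\tau_i$ to get a large subtrack $\eta<\tau_i$ and a diagonal extension $\xi$ filled by $c$, show that the disjoint curve $d$ is carried by a further diagonal extension of $\xi$ (hence of $\eta$), and then apply wideness to conclude $i(d)\geq j$. The only difference is that the paper handles your middle step by citing Lemma 4.4 of Masur--Minsky rather than reproving it; your sketch of that step is the standard argument, with the minor caveat that an arc of the simple closed curve $d$ in a once-punctured complementary region does not actually terminate at the puncture but is a diagonal joining cusps which may separate off the puncture.
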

\begin{proof}
Assume that $P(c)=\tau_i\prec \tau_j$ is wide. By 
the definition of the map $P$, 
there is a large subtrack $\eta$ of $\tau_i$ so that
$c$ fills a diagonal extension $\xi$ of $\eta$. 
By Lemma 4.4 of \cite{MM99}, since $d$ is disjoint from $c$, 
$d$ is carried by a diagonal extension $\zeta$ of 
$\xi$. Then $\zeta$ is a diagonal
extension of $\eta$.

Since $\tau_i\prec\tau_j$ is wide, 
$d$ fills a diagonal extension of a large subtrack of 
$\tau_j$. This implies that $P(d)=\tau_s$ for some $s\geq j$.
\end{proof}

Define a distance function $d_g$ on $(\tau_i)_{0\leq i\leq n}$ as follows.
For $i<j$, the \emph{gap distance} $d_g(\tau_i,\tau_j)$
between $\tau_i$ and $\tau_j$  is the 
smallest number $k>0$ so that there is a sequence
$i_0=i<i_1<\dots< i_k=j$ with the property that for each $p<k$, 
the pair $\tau_{i_{p+1}}\prec \tau_{i_{p}}$ is \emph{not} wide.
Note that this defines indeed a distance since 
for each $\ell$ the pair $\tau_{\ell+1}\prec\tau_{\ell}$ is not wide
and hence $d_g(\tau_i,\tau_j)\leq j-i$. Moreover, 
the triangle inequality is immediate from Lemma \ref{widecarry}.

The following is a consequence of Lemma \ref{project}.
For its formulation, define a map
$P$ from a metric space $X$ to a metric space $Y$ to be
\emph{coarsely $L$-Lipschitz} for some $L>1$ if
$d(Px,Py)\leq Ld(x,y)+L$ for all $x,y\in X$.

\begin{corollary}\label{project2}
The map $P:{\cal C\cal G}\to ((\tau_i),d_g)$ is coarsely  
$2$-Lipschitz.
\end{corollary}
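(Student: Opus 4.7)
The plan is to establish $d_g(P(c),P(d))\leq 2$ for any pair of disjoint simple closed curves $c,d$ on $S$, then bootstrap to arbitrary pairs via the triangle inequality along a geodesic in ${\cal C\cal G}$. The correct way to deploy Lemma \ref{project} is via its contrapositive: if $c,d$ are disjoint, $P(d)=\tau_b$, and $P(c)=\tau_a$ with $a<j\leq b$, then the pair $\tau_b\prec\tau_j$ cannot be wide, for otherwise Lemma \ref{project} (applied with the roles of $c$ and $d$ exchanged) would force $a\geq j$.

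Concretely, let $c,d$ be disjoint and set $P(c)=\tau_a$, $P(d)=\tau_b$, with $a\leq b$ after relabelling; if $a=b$ then $d_g=0$, so assume $a<b$. Applying the contrapositive with $j=a+1\leq b$ shows that $\tau_b\prec\tau_{a+1}$ is not wide. The excerpt also records that adjacent pairs $\tau_{\ell+1}\prec\tau_\ell$ are never wide. Hence, if $b=a+1$ the one-step chain $(a,a+1)$ witnesses $d_g(\tau_a,\tau_b)=1$, while if $b\geq a+2$ the chain $(a,a+1,b)$ has both consecutive pairs non-wide and witnesses $d_g(\tau_a,\tau_b)\leq 2$.

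For arbitrary $c,d\in{\cal C\cal G}$, choose a geodesic $c=c_0,c_1,\ldots,c_n=d$ with $n=d_{\cal C\cal G}(c,d)$, so that $c_k$ and $c_{k+1}$ are disjoint for each $k$. The triangle inequality for $d_g$ (justified by its definition as an infimum of chain lengths) gives
\[
d_g(P(c),P(d)) \;\leq\; \sum_{k=0}^{n-1} d_g(P(c_k),P(c_{k+1})) \;\leq\; 2n,
\]
and the extension of $P$ from vertices to edges of ${\cal C\cal G}$ introduces only a bounded additive constant, yielding the coarse $2$-Lipschitz inequality. The only real subtlety is recognising that the useful consequence of Lemma \ref{project} is the \emph{non-wideness} of the intermediate pair $\tau_b\prec\tau_{a+1}$, which is exactly the condition needed to construct the short chain in the middle paragraph; once this is seen, the rest is a routine concatenation.
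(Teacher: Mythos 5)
Your argument is correct and is precisely the intended deduction from Lemma \ref{project}: for disjoint $c,d$ with $P(c)=\tau_a$, $P(d)=\tau_b$, $a<b$, the contrapositive of Lemma \ref{project} shows $\tau_b\prec\tau_{a+1}$ is not wide, so the chain $a<a+1<b$ (together with the fact that adjacent pairs are never wide) gives $d_g\leq 2$, and one concatenates along a geodesic using the triangle inequality for $d_g$. One small remark: the triangle inequality is not automatic from ``infimum of chain lengths'' since chains are required to be increasing; the paper derives it from Lemma \ref{widecarry}, and that is what you should cite for it.
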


Define a map
$\Upsilon:(\tau_i)_{0\leq i\leq n}\to {\cal C\cal G}$ 
by associating to the train track $\tau_i$ 
one of its vertex cycles.  We have

\begin{lemma}\label{gapmetric}
The map $\Upsilon:((\tau_i),d_g)\to {\cal C\cal G}$ 
is coarsely $22$-Lipschitz. 
\end{lemma}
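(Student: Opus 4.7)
The plan is a two-step reduction: first, reduce by the triangle inequality in the curve graph to the case of a single non-wide step; second, for such a step, use the witness simple closed curve provided by the failure of wideness as a bridge between $\Upsilon(\tau_j)$ and $\Upsilon(\tau_i)$.

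Concretely, if $d_g(\tau_i,\tau_j)=k$, I would choose a chain $i=i_0<i_1<\cdots<i_k=j$ with each consecutive pair $\tau_{i_{p+1}}\prec\tau_{i_p}$ not wide, which exists by the definition of the gap distance. By the triangle inequality in ${\cal C\cal G}$ it then suffices to bound $d_{{\cal C\cal G}}(\Upsilon(\tau_{i_p}),\Upsilon(\tau_{i_{p+1}}))$ by a universal constant; summing over the $k$ steps yields $d_{{\cal C\cal G}}(\Upsilon(\tau_i),\Upsilon(\tau_j))\leq 22\cdot d_g(\tau_i,\tau_j)$, which is the coarse $22$-Lipschitz estimate claimed.

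For a single non-wide pair $\tau_j\prec\tau_i$, the negation of the wideness condition produces an essential simple closed curve $c$ that is carried by a diagonal extension $\xi$ of some large subtrack $\eta<\tau_j$ but which does not fill any diagonal extension of any large subtrack of $\tau_i$. I would use $c$ as a bridge, bounding $d_{{\cal C\cal G}}(c,\Upsilon(\tau_j))$ and $d_{{\cal C\cal G}}(c,\Upsilon(\tau_i))$ separately and invoking the triangle inequality. For the first bound, the combination of the fact that $c$ lives on a diagonal extension of a subtrack of $\tau_j$ and the fact from Lemma~2.2 of \cite{H06} that vertex cycles of $\tau_j$ pass through each branch at most twice yields a uniform bound on $\iota(c,\Upsilon(\tau_j))$ in terms of the topology of $S$, and hence on curve graph distance via the standard logarithmic intersection--distance estimate.

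The second bound, $d_{{\cal C\cal G}}(c,\Upsilon(\tau_i))\leq C_2$, is the main obstacle. Here I must convert the non-filling hypothesis into concrete combinatorial information about $\tau_i$. The plan is to observe that, since $c$ is carried by $\tau_i$ via composition of carrying maps but fails to fill any diagonal extension of any large subtrack, its support under iterated carrying must be a non-large subtrack $\sigma<\tau_i$; a complementary region of $\sigma$ which is neither simply connected nor a once-punctured disk then contains an essential simple closed curve $d$ disjoint from $c$, so $d_{{\cal C\cal G}}(c,d)\leq 1$, and a further intersection argument using the branch structure of $\tau_i$ places $d$ at bounded distance from $\Upsilon(\tau_i)$. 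The delicacy lies in making this extraction uniform, i.e.\ ensuring that all constants depend only on the topology of $S$ and not on the complexity of the witness $c$ or on how deep into the splitting sequence one travels.
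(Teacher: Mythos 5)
Your overall skeleton (reduce to a single non-wide pair via the triangle inequality, then use the witness curve $c$ of the failure of wideness as a bridge between the two vertex cycles) is the same as the paper's. But the first of your two bounds contains a genuine error: you claim that the mere fact that $c$ is carried by a diagonal extension $\xi$ of a large subtrack of $\tau_j$, together with the bounded branch multiplicity of vertex cycles, yields a uniform bound on $\iota(c,\Upsilon(\tau_j))$. That is false. The multiplicity-two property bounds intersections between two curves that are \emph{both} realized as edge paths crossing each branch at most twice; the witness $c$ is an arbitrary carried curve and may traverse branches with arbitrarily large weights, so its intersection with a fixed vertex cycle, and even its curve graph distance from it, is unbounded. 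Indeed, if carrying alone bounded the distance to a vertex cycle, then every curve carried by $\tau_j$ (in particular every vertex cycle of every later track in the splitting sequence, all of which are carried by $\tau_j$) would lie at bounded distance from $\Upsilon(\tau_j)$, forcing the image under $\Upsilon$ of any splitting and shifting sequence to have bounded diameter in ${\cal C\cal G}$ --- contradicting the fact that such images are unparametrized quasi-geodesics of arbitrarily large diameter.

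The missing idea, and the way the paper closes exactly this gap, is that the non-filling hypothesis can be pushed \emph{down} to the side of $\tau_j$ as well: the carrying map $\tau_j\to\tau_i$ induces a carrying map from $\xi$ onto a diagonal extension $\zeta$ of a large subtrack of $\tau_i$, and since carrying maps send large subtracks onto large subtracks, the fact that $c$ does not fill a large subtrack of $\zeta$ implies that $c$ does not fill a large subtrack of $\xi$ either. Once both sides carry the non-filling information, the same argument applies to each: the support of $c$ being a non-large subtrack produces an essential simple closed curve disjoint from $c$ which can be represented by an edge path passing through each branch at most twice, hence meeting a vertex cycle of the diagonal extension in at most $4$ points (Corollary 2.3 of \cite{H06}), giving $d_{{\cal C\cal G}}(c,\cdot)\leq 6$ to a vertex cycle of $\xi$, respectively of $\zeta$; and a vertex cycle of a diagonal extension of a subtrack of $\tau_j$ (resp.\ $\tau_i$) meets a vertex cycle of $\tau_j$ (resp.\ $\tau_i$) in at most $4$ points, contributing a further $5$. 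This yields $11$ on each side and hence the constant $22$, uniformly in the topology of $S$ --- note that your proposed constants ``in terms of the topology of $S$'' would in any case be weaker than the statement, which asserts a universal constant and is used as such in Corollary \ref{retract}. Your sketch of the second bound (extracting a disjoint curve from a complementary region of the non-large support) is the right idea, but it must be carried out in $\zeta$ rather than in $\tau_i$ itself, since $c$ need not be carried by $\tau_i$, and the uniformity you flag as delicate is supplied precisely by the edge-path multiplicity bound just described.
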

\begin{proof} It suffices to show the following.
If $\tau\prec \eta$ is not wide then 
the distance in ${\cal C\cal G}$ between
a vertex cycle $\alpha$ of $\tau$ and a vertex
cycle $\beta$ of $\eta$ is at most $22$.

To this end note that if 
$\alpha$ is a simple closed curve which is 
carried by a large train track $\xi$ 
then the image of $\alpha$ under a carrying map
is a subtrack of $\xi$. If this subtrack is not large 
then $\alpha$ is disjoint from an essential simple closed  
curve $\alpha^\prime$
which can be represented by an edge-path in $\xi$
(possibly with corners) which passes through 
any branch of $\xi$ at most twice. 
Since a vertex cycle of $\xi$ passes through
each branch of $\xi$ at most twice \cite{Mo03,H06}, 
this implies that $\alpha^\prime$ intersects a 
vertex cycle of $\xi$ in at most $4$ points
(Corollary 2.3 of \cite{H06}). In particular,
the distance in ${\cal C\cal G}$ between 
$\alpha$ and a vertex cycle of $\xi$ 
is at most $6$ \cite{MM99}.

On the other hand, if 
$\tau$ is another large train track and
if $\xi$ is a diagonal extension of a large subtrack 
$\tau^\prime$ of $\tau$ then 
a vertex cycle of $\xi$ intersects a vertex cycle of 
$\tau$ in at most $4$ points. Hence
the distance in ${\cal C\cal G}$ 
between a vertex cycle of $\tau$
and a vertex cycle of $\xi$ is at most $5$.
Together we deduce that 
the distance in ${\cal C\cal G}$ between
$\alpha$ and a vertex cycle of $\tau$ does not exceed $11$.

Now by definition, if $\tau\prec \eta$ is not wide then there is 
a curve $\alpha$ which is 
carried by a diagonal extension 
$\xi$ of a large subtrack $\tau^\prime$ of $\tau$
and such that the following holds true.
A carrying map $\tau\to \eta$ induces a carrying map of $\xi$ onto 
a diagonal extension $\zeta$ of a large subtrack of $\eta$.
The train track $\zeta$ carries 
$\alpha$ and so that $\alpha$ does not fill a large
subtrack of $\zeta$.  Since a carrying map 
$\xi\to \zeta$ maps a large subtrack of $\xi$ onto
a large subtrack of $\zeta$, the curve $\alpha$ does not
fill a large subtrack of $\xi$.

By the above discussion, the distance in ${\cal C\cal G}$
between $\alpha$ and any vertex cycle of 
both $\tau$ and $\eta$ is at most $11$.
This shows the lemma.
\end{proof}

Call a map $\Phi$ of a metric space
$(X,d)$ into a subset
$A$ of $X$ 
a \emph{coarse Lipschitz 
retraction} if there is a number $L>1$ with the following
properties.
\begin{enumerate}
\item $d(\Phi(x),\Phi(y))\leq Ld(x,y)+L.$
\item $d(x,\Phi(x))\leq L$ whenever $x\in A$.
\end{enumerate}

We are now ready to show

\begin{corollary}\label{retract}
For any splitting and shifting sequence $(\tau_i)_{0\leq i\leq n}$ 
the map $\Upsilon \circ P$ is a coarse $L$-Lipschitz
retraction of ${\cal C\cal G}$ for a number $L>1$ not depending on $(\tau_i)$
or on the Euler characteristic  of $S$.
\end{corollary}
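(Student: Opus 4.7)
The plan is to verify the two defining conditions of a coarse Lipschitz retraction for the map $\Phi=\Upsilon\circ P$. Condition~(1), the coarse Lipschitz bound, is essentially automatic: Corollary~\ref{project2} gives that $P:{\cal C\cal G}\to((\tau_i),d_g)$ is coarsely $2$-Lipschitz and Lemma~\ref{gapmetric} gives that $\Upsilon:((\tau_i),d_g)\to{\cal C\cal G}$ is coarsely $22$-Lipschitz, so the composition is coarsely $L_1$-Lipschitz with a universal constant $L_1\le 66$.

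For condition~(2) the strategy is to prove the stronger uniform estimate
\[
d_{{\cal C\cal G}}\bigl(c,\Upsilon(P(c))\bigr)\le L_0
\]
for every $c\in{\cal C\cal G}$ and a universal constant $L_0$, from which condition~(2) with $L=\max\{L_0,L_1\}$ is immediate on the image $A=\Phi({\cal C\cal G})\subseteq{\cal C\cal G}$. Fix $c$ and set $i=i(c)$. By the definition of $P$ there are a large subtrack $\eta$ of $\tau_i$ and a diagonal extension $\xi$ of $\eta$ such that $c$ is carried by $\xi$ and in fact fills $\xi$. For any vertex cycle $v_\xi$ of $\xi$ the triangle inequality gives
\[
d_{{\cal C\cal G}}\bigl(c,\Upsilon(\tau_i)\bigr)\le d_{{\cal C\cal G}}(c,v_\xi)+d_{{\cal C\cal G}}\bigl(v_\xi,\Upsilon(\tau_i)\bigr),
\]
and the second summand is bounded by $5$ by the intermediate step in the proof of Lemma~\ref{gapmetric}, since $\xi$ is a diagonal extension of a large subtrack of $\tau_i$ and so its vertex cycles meet $\Upsilon(\tau_i)$ in at most $4$ points.

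The main obstacle is the first summand $d_{{\cal C\cal G}}(c,v_\xi)$: the non-filling argument inside the proof of Lemma~\ref{gapmetric} does not apply here because $c$ fills $\xi$. The way around is to use the maximality of $i=i(c)$ built into the definition of $P$. For the successor track $\tau_{i+1}$ (if it exists) the curve $c$ fails to fill any diagonal extension of any large subtrack, so wherever $c$ sits relative to $\tau_{i+1}$ its image inside any relevant diagonal extension is not large. The non-filling half of the proof of Lemma~\ref{gapmetric} then bounds $d_{{\cal C\cal G}}(c,\Upsilon(\tau_{i+1}))$ by a universal constant, and Lemma~\ref{gapmetric}, applied to the pair $\tau_{i+1}\prec\tau_i$ which is never wide, bounds $d_{{\cal C\cal G}}(\Upsilon(\tau_{i+1}),\Upsilon(\tau_i))$ by $22$. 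Adding these yields the required universal bound on $d_{{\cal C\cal G}}(c,\Upsilon(\tau_i))$, and hence on $d_{{\cal C\cal G}}(c,v_\xi)$. The boundary case $i(c)=n$ is handled identically by passing to $\tau_{i-1}$ in place of $\tau_{i+1}$, since the step $\tau_i\prec\tau_{i-1}$ is again never wide. The resulting constant $L=\max\{L_0,L_1\}$ depends on neither the sequence $(\tau_i)$ nor on the Euler characteristic of $S$, as required.
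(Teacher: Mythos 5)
Your condition~(1) is fine and matches the paper. The problem is condition~(2): you set out to prove that $d_{{\cal C\cal G}}(c,\Upsilon(P(c)))$ is uniformly bounded for \emph{every} curve $c$, and this statement is false --- the image of $\Upsilon\circ P$ is a finite set of vertex cycles of the $\tau_i$, while ${\cal C\cal G}$ has infinite diameter, so there are curves $c$ arbitrarily far from every $\Upsilon(\tau_j)$ and hence from $\Upsilon(P(c))$. Condition~(2) of a coarse Lipschitz retraction only requires the bound for points of the image $A$, i.e.\ for the chosen vertex cycles of the $\tau_i$, and the paper reduces to exactly this case. The concrete gap in your argument is the step where, using maximality of $i=i(c)$, you claim that ``wherever $c$ sits relative to $\tau_{i+1}$ its image inside any relevant diagonal extension is not large'' and then invoke the non-filling half of the proof of Lemma \ref{gapmetric}. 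That half of the argument needs $c$ to be \emph{carried} by (a diagonal extension of a large subtrack of) $\tau_{i+1}$; maximality of $i(c)$ only rules out the conjunction ``carried and fills'', it does not supply the carrying. For a curve unrelated to the splitting sequence (for instance one with $i(c)=0$ by the convention in the definition of $P$) there is no carrying at all, and no bound on $d_{{\cal C\cal G}}(c,\Upsilon(\tau_{i+1}))$ can be extracted. Your treatment of the boundary case $i(c)=n$ via $\tau_{i-1}$ fails for the same reason, and in addition $c$ may well fill a diagonal extension of a large subtrack of $\tau_{i-1}$, since $i(c)$ is only the \emph{largest} index with that property.

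What makes the argument work for the points that actually matter is the extra structure of vertex cycles, and this is the paper's route: if $\alpha$ is a vertex cycle of $\tau_i$, then $\alpha$ is carried by every $\tau_j$ with $j\leq i$ and does not fill a diagonal extension of a large subtrack of $\tau_i$; consequently, if $\tau_i\prec\tau_j$ were wide, then $\alpha$ would fill a diagonal extension of a large subtrack of $\tau_j$, forcing $i(\alpha)\geq j$. Hence, writing $P(\alpha)=\tau_s$, the pair $\tau_i\prec\tau_{s+1}$ is not wide, so the gap distance between $\tau_s$ and $\tau_i$ is at most $2$, and Lemma \ref{gapmetric}, together with the fact that two vertex cycles of the same train track intersect in at most four points and hence are at uniformly bounded distance in ${\cal C\cal G}$, gives the uniform bound $d_{{\cal C\cal G}}(\alpha,\Upsilon(P(\alpha)))\leq L$. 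Your proposal needs to be restricted to such $\alpha$ and rebuilt around these carrying and non-filling properties of vertex cycles rather than around the maximality of $i(c)$ alone.
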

\begin{proof} Let $d_{\cal C\cal G}$ be the distance in the curve graph of $S$.
By Corollary \ref{project2} and 
Lemma \ref{gapmetric}
it suffices to show that
$d_{\cal C\cal G}(\alpha,\Upsilon\circ P(\alpha))\leq L$ for a universal constant
$L>1$ and every vertex cycle $\alpha$ of  a train track $\tau_i$
from the sequence.

To this end observe that since $\alpha$ is a vertex cycle of $\tau_i$,
$\alpha$ is carried by each of the train tracks $\tau_j$ for $j\leq i$, 
moreover $\alpha$ does not fill a diagonal extension of a large
subtrack of $\tau_i$. On the other hand,
by definition of a wide pair, 
if $\tau_i\prec \tau_j$ is wide then 
$\alpha$ fills a large subtrack of $\tau_j$. This means
that $P(\alpha)=\tau_s$ for some $s\geq j$ so that
the pair $\tau_{i}\prec \tau_{s+1}$ is \emph{not} wide.
The corollary now follows from Lemma \ref{gapmetric}. 
\end{proof}

{\bf Remark:} The above discussion immediately implies 
that the image under $\Upsilon$ of a splitting and shifting
sequence of train tracks is an unparametrized quasi-geodesic
in ${\cal C\cal G}$ for a constant not depending on the Euler
characteristic of $S$. A non-effective version of this result  
was earlier established in \cite{MM04}
(see also \cite{H06}).

\section{Quasi-geodesics in the superconducting disk graph}
\label{quasigeodesics}

In this section we resume the discussion of an oriented 3-manifold
$M$ and a subsurface $X$ of the boundary of $M$ whose boundary
is incompressible in $M$.

Recall the definition of the graph
${\cal S\cal D\cal G}(X)$. 
Our goal is to show that
the natural map which associates to a disk its boundary
defines a quasi-isometric embedding of 
${\cal S\cal D\cal G}(X)$ into the curve graph 
${\cal C\cal G}(X)$ of $X$. To simplify the notation we
identify in the sequel a disk in $M$ with boundary in $X$ 
with its boundary circle.
Thus we view the vertex set of ${\cal S\cal D\cal G}(X)$ as
a subset of the curve graph ${\cal C\cal G}(X)$ of $X$.

The argument is based on the results in Section 2-3
and a construction from \cite{MM04}.
This construction uses a specific type of surgery sequences 
of disks which can be related to train tracks
as follows.

Let  $D,E\subset H$ be two disks in normal position, with boundary
in $X$. Let $E^\prime$
be an outer component of $E-D$ and let $D_1$ be a disk
obtained from $D$ by simple surgery at $E^\prime$. 
%Assume that $D_1$ is essential. This is for example the case
%if $\partial H-(\partial D\cup \partial E)$ does not have 
%a once punctured bigon component.

Let $\alpha$ be the intersection of $\partial E^\prime$ with
$X$. Then up to isotopy, the boundary $\partial D_1$ of the
disk $D_1$ contains $\alpha$ as an embedded subarc. Moreover,
$\alpha$ is disjoint from $E$. In particular, given an outer component
$E^{\prime\prime}$ of $E-D_1$, there is a distinguished choice for
a disk
$D_2$ obtained from $D_1$ by simple surgery at $E^{\prime\prime}$.
 The disk $D_2$  
is determined by the requirement that $\alpha$ is \emph{not}
a subarc of $\partial D_2$.  
Then for an outer component of 
$E-D_2$ there is a distinguished choice for a disk $D_3$ obtained
from $D_2$ by simple surgery at an outer component
of $E-D_2$  etc. We call a surgery sequence $(D_i)$ of this form
a \emph{nested} surgery path in direction of $E$.
Note that the boundary of each disk $D_i$ is composed of
a single subarc of $\partial D$ and a single subarc of 
$\partial E$.

The following result is due to Masur
and Minsky (this is Lemma 4.2 of \cite{MM04} which
is based on Lemma 4.1 and the proof of Theorem 1.2 in that paper).
%In its formulation, we assume that none of the disks 
%in the surgery path is peripheral.

\begin{proposition}\label{nestedcarried}
Let 
$D,E\subset X$ be any disks. Let  
$D=D_0,\dots,D_n$ be a nested surgery path in the direction of $E$
which  
connects $D$ to a disk $D_n$ disjoint from $E$.
Then for each
$i\leq n$ there is a train track $\tau_i$ on $X$ 
with a single switch such that
the following holds true. 
\begin{enumerate}
\item $\tau_i$ carries $\partial E$ and $\partial E$ fills up $\tau_i$.
\item $\tau_{i+1}\prec \tau_{i}$.
\item The disk $D_i$ intersects $\tau_i$ only at the switch.
\end{enumerate}  
\end{proposition}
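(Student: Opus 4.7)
My plan is to proceed by induction on $i$, building each train track $\tau_i$ from $\partial E$ by localizing the intersections with $\partial D_i$ into a single switch, and then reading off the carrying relation $\tau_{i+1}\prec \tau_i$ from the simple surgery move.

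For the construction, set $k_i = \iota(\partial D_i, \partial E)$. Choose an arc $\gamma_i$ on $\partial D_i$ and a thin regular neighborhood $N_i\subset X$ of $\gamma_i$ such that, after a small isotopy of $\partial E$ supported in a collar of $\partial D_i$, every intersection of $\partial E$ with $\partial D_i$ occurs inside $N_i$; then $\partial E\cap N_i$ consists of $k_i$ transverse arcs crossing $N_i$. Let $\tau_i$ be obtained from $\partial E$ by collapsing the portion inside $N_i$ to a single switch $v_i$, the two sides of $v_i$ being the two components of $\partial N_i$ parallel to $\gamma_i$. The $k_i$ branches of $\tau_i$ are then the arcs of $\partial E \setminus N_i$; $\partial E$ traverses each of them exactly once, so $\tau_i$ is carried by and filled by $\partial E$, verifying (1). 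By construction $\partial D_i\cap \tau_i = \{v_i\}$, which is (3).

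For (2), examine the simple surgery that turns $D_i$ into $D_{i+1}$ at an outer component $E_{i+1}'$ of $E - D_i$. Its outer arc $\alpha_{i+1}=\partial E_{i+1}'\cap X$ is a subarc of $\partial E$ whose two endpoints lie on $\partial D_i$ on the same side of $\partial D_i$; in $\tau_i$ it is thus exactly a branch $b_{i+1}$ which is a loop at $v_i$ with both of its ends on the same side of the switch. Simple surgery replaces one half of $\partial D_i$ by $\alpha_{i+1}$, so, choosing $N_{i+1}$ inside $N_i\cup U(\alpha_{i+1})$ for a thin tubular neighborhood $U(\alpha_{i+1})$ of $\alpha_{i+1}$, the collapse producing $\tau_{i+1}$ factors through the collapse producing $\tau_i$ outside $b_{i+1}$ and executes a standard split of $\tau_i$ along $b_{i+1}$ inside that branch. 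A split is always carried by the original, supplying the required $\tau_{i+1}\prec \tau_i$.

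The main obstacle is the local analysis showing that the simple surgery is matched by a genuine split (or collision) rather than by a more general carrying, and that the nested prescription choosing which half of $\partial D_i$ survives in $\partial D_{i+1}$ is the one corresponding to the correct left/right option of the split. The previous outer arc $\alpha_i$ is precisely the diagonal of the previous split of $\tau_{i-1}$, so the condition $\alpha_i\not\subset \partial D_{i+1}$ translates in train-track language as the statement that the current split does not undo the preceding one. Once this correspondence is verified at each step, induction produces a splitting (and shifting) sequence of single-switch train tracks with the three listed properties.
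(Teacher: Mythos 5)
Your construction of the tracks $\tau_i$ is essentially the one the paper uses (the paper does not reprove the statement: it quotes Lemma 4.2 of \cite{MM04} and only sketches the construction, collapsing $\partial D_i$ minus a suitable interval so that the image of $\partial E$ becomes a one--switch track). The genuine gap is in your verification of (2): the claim that $\tau_{i+1}$ is obtained from $\tau_i$ by a \emph{single} standard split along the branch $b_{i+1}$ is false in general, and your proof of the carrying relation rests on it. When you pass from $D_i$ to $D_{i+1}$, whose boundary is $a\cup b_{i+1}$ with $a\subset\partial D_i$, you discard the complementary arc $c=\partial D_i\setminus a$. Every intersection point of $\partial E$ with the interior of $c$ is fused into the switch of $\tau_i$, but in $\tau_{i+1}$ only the points of $\partial E\cap a$ are fused; the released points can be arbitrarily many (roughly the drop in $\iota(\partial D_i,\partial E)$), and the branch ends of $\tau_i$ terminating at them are distributed all around the switch, so the modification is in no way localized inside the single branch $b_{i+1}$. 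A single split or collision changes a one--switch track by a bounded amount and cannot realize the release of an arbitrary number of fused strands; this is precisely why the paper later invokes Theorem 2.3.1 of \cite{PH92} to interpolate a whole splitting and shifting sequence between consecutive $\tau_i$ --- the relation asserted in (2) is only $\tau_{i+1}\prec\tau_i$, not a single elementary move. Your phrase ``the collapse producing $\tau_{i+1}$ factors through the collapse producing $\tau_i$'' also has the direction reversed: the $\tau_i$--collapse makes \emph{more} identifications, so it factors through the $\tau_{i+1}$--collapse.

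The intended argument for (2) goes through this reversed factorization directly: because the surgery path is nested, the retained arc on $\partial D_{i+1}$ containing all intersections with $\partial E$ is a subarc of the retained arc for $D_i$, so the identifications on $\partial E$ defining $\tau_{i+1}$ form a subset of those defining $\tau_i$; re-fusing the remaining points gives a map $\tau_{i+1}\to\tau_i$, and one must then check that the tangential data at the new switch (which side of the switch each branch attaches to, read off from the two sides of the subarc of $\partial D_i$ being collapsed) is consistent, so that this map is a genuine carrying map. This is where nestedness enters, not as ``the split does not undo the previous one.'' Two smaller omissions: you never collapse complementary bigons (without this the collapsed object need not be a train track, and your assertion that $\partial E$ traverses each branch exactly once fails once parallel strands are identified), and you ignore the paper's choice of the uncollapsed interval $I$ as an edge of a complementary polygon with at least six sides, which is what rules out illegal complementary regions after the collapse. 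Either supply these details or, as the paper does, cite Lemma 4.2 of \cite{MM04}.
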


The train tracks $\tau_i$ in the proposition are constructed
as follows.

Let $\alpha=\partial D,\beta=\partial E$. Assume that the
curves $\alpha,\beta$ are smooth (for a smooth structure on
$X$) 
and fill up $X$. This means that the
complementary components of $\alpha\cup \beta$ are all
polygons or once holed polygons where in our setting, 
a hole is a boundary component of $X$.
Let $P$ be a complementary polygon
which has at least 6 sides. Such a polygon
exists since the Euler characteristic of $X$ is negative.
Its edges are subsegments of $\alpha$ and $\beta$. 
Let $I$ be a boundary edge of $P$ 
contained in $\alpha$.  
Collapse $\alpha-I$ to a single point with a homotopy $F$ of $X$.
This can be done in such a way that the restriction of $F$ to 
$\beta$ is nonsingular everywhere. The 
resulting graph has a single vertex. Collapsing the bigons in the
graph to single arcs yields a train track $\tau$ with a single
switch \cite{MM04}. 
%A once punctured bigon in $X-\alpha\cup\beta$
%either contains the arc $I$ as one of its sides, or it collapses
%to a once punctured monogon in $\tau$.

Let $b\subset \beta$  be an outer arc for $E-D$
and let $a\subset \alpha-I$ be the subarc of $\alpha$
which is bounded by the endpoints of $b$ and which 
does not intersect the interval $I$. Then $a\cup b$ is the boundary of 
a disk $D_1$ obtained from $D$ by nested surgery at $b$.
The new train track $\tau_1$ obtained from the
above construction is
obtained from $\beta\cup a$
by collapsing the arc $a$ to a single point
(we refer to  \cite{MM04} for details).

In the formulation of the following result, $\chi(X)$ denotes the Euler
characteristic of the surface $X$.

\begin{theorem}\label{quasi}
There is an explict quadratic polynomial $p$ 
such that
the vertex inclusion defines a $p(\vert \chi(X)\vert)$-quasi-isometric embedding
${\cal S\cal D\cal G}(X)\to {\cal C\cal G}(X)$. In particular,
${\cal S\cal D\cal G}(X)$ is a hyperbolic geodesic metric graph. 
\end{theorem}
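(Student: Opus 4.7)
Since the vertex inclusion ${\cal S\cal D\cal G}(X) \to {\cal C\cal G}(X)$ is already coarsely $6$-Lipschitz, it suffices to produce a polynomial $p$ in $|\chi(X)|$ such that
\[
d_{\cal S}(D,E) \le p(|\chi(X)|)\, d_{\cal C}(\partial D,\partial E) + p(|\chi(X)|).
\]
My plan is to combine three ingredients developed in the preceding sections: the nested surgery construction of Proposition \ref{nestedcarried}, which attaches a splitting and shifting sequence of train tracks to any nested surgery path of disks; the coarse Lipschitz retraction $\Upsilon \circ P$ of Corollary \ref{retract}, which picks out from such a splitting sequence a short subsequence whose length is controlled by the curve graph distance; and the intersection estimate of Proposition \ref{distanceinter2}, which converts a curve of low intersection with two disk boundaries into a bound on $d_{\cal S}$.

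Given disks $D, E$ with $k = d_{\cal C}(\partial D,\partial E)$, I first build a nested surgery path $D = D_0, D_1, \dots, D_n$ in the direction of $E$ terminating at a disk $D_n$ disjoint from $E$ (so that $d_{\cal S}(D_n,E)\le 1$), and let $\tau_0 \succ \tau_1 \succ \dots \succ \tau_n$ be the splitting and shifting sequence of Proposition \ref{nestedcarried}. Each $\tau_i$ carries $\partial E$, and because $D_i$ meets $\tau_i$ only at the single switch, $\partial D_i$ is realised in $\tau_i$ as a loop traversing a single branch, hence a vertex cycle. Applying the retraction $\Upsilon \circ P$ to a ${\cal C\cal G}(X)$-geodesic from $\partial D$ to $\partial E$ then yields, via Corollary \ref{project2}, a subsequence $\tau_0 = \tau_{i_0}, \tau_{i_1}, \dots, \tau_{i_m} = \tau_n$ with $m \le 2k+2$ along which every consecutive pair $\tau_{i_{p+1}} \prec \tau_{i_p}$ fails to be wide.

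The main obstacle is to bound $d_{\cal S}(D_{i_p}, D_{i_{p+1}})$ by a polynomial in $|\chi(X)|$ for each $p$; this is the junction where the train track machinery of Section \ref{distanceinthe} must be welded to the surgery and intersection analysis of Section \ref{distance}. By the failure of the wide condition there is an essential simple closed curve $c$---namely, a vertex cycle of a diagonal extension $\xi$ of a large subtrack of $\tau_{i_{p+1}}$---that does not fill any diagonal extension of a large subtrack of $\tau_{i_p}$. The bounds appearing in the proof of Lemma \ref{gapmetric} yield $\iota(c, \partial D_{i_{p+1}}) \le C_1$ for an absolute constant $C_1$, while a parallel analysis---using that the carrying of $c$ into $\tau_{i_p}$ lies in a non-large subtrack, together with the fact that vertex cycles of a large train track on $X$ pass each branch at most twice and that the number of branches of such a train track is linear in $|\chi(X)|$---should give $\iota(c, \partial D_{i_p}) \le C_2(|\chi(X)|)$ with $C_2$ polynomial (in fact I expect linear). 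Proposition \ref{distanceinter2} then produces $d_{\cal S}(D_{i_p},D_{i_{p+1}}) \le 2\max(C_1, C_2(|\chi(X)|)) + 4$.

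Summing over the at most $2k+2$ consecutive pairs and adding the final step $d_{\cal S}(D_n,E)\le 1$ yields $d_{\cal S}(D,E) \le (2k+2)\bigl(2C_2(|\chi(X)|)+4\bigr)+1$, which is of the desired polynomial form---at worst quadratic once one accounts for both $k$ and the $|\chi(X)|$-dependence of the constants. The hyperbolicity of ${\cal S\cal D\cal G}(X)$ then follows from the quasi-isometric embedding into the hyperbolic graph ${\cal C\cal G}(X)$, since geodesic triangles in ${\cal S\cal D\cal G}(X)$ map to uniformly slim quasi-geodesic triangles in ${\cal C\cal G}(X)$.
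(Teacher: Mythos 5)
Your overall skeleton matches the paper's: nested surgery path with the train tracks of Proposition \ref{nestedcarried}, the gap/wideness machinery of Section \ref{distanceinthe} to cut the sequence into boundedly many non-wide jumps controlled by $d_{{\cal C\cal G}}(\partial D,\partial E)$, and Proposition \ref{distanceinter2} to convert low intersection with a common curve into a bound on $d_{\cal S}$. But the step you flag as "the main obstacle" is exactly where your argument has a genuine gap. The failure of wideness for $\tau_{i_{p+1}}\prec\tau_{i_p}$ only gives you \emph{some} simple closed curve $\alpha$ carried by a diagonal extension of a large subtrack of $\tau_{i_{p+1}}$ which fails to fill a diagonal extension of a large subtrack of $\tau_{i_p}$; you have no right to take it to be a vertex cycle, and more importantly the bound $\iota(\alpha,\partial D_{i_p})\leq C_2(\vert\chi(X)\vert)$ is false in general. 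The fact that $D_{i_p}$ meets $\tau_{i_p}$ only at the switch bounds the intersection of $\partial D_{i_p}$ with the \emph{track} (and with any diagonal extension) by a multiple of $\vert\chi(X)\vert$, but a curve carried by the track can run over branches with arbitrarily large weights and therefore cross the switch -- and hence $\partial D_{i_p}$ -- arbitrarily often; knowing that its image fills only a non-large subtrack does not cap these weights. So Proposition \ref{distanceinter2} cannot be applied with $\alpha$ itself, and your per-pair bound collapses.

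The paper's proof circumvents precisely this: instead of $\alpha$, it uses for each intermediate index $p\in[k,i]$ the subtrack $\beta_p$ of $\zeta_p$ filled by the image of $\alpha$, and the boundary of the subsurface $Y_p$ obtained by thickening $\beta_p$; this boundary is an embedded edge path in $\beta_p$, so it meets $\partial D_p$ in at most $2\kappa$ points with $\kappa$ linear in $\vert\chi(X)\vert$. The price is that $\partial Y_p$ changes along the way, so no single curve has small intersection with both $\partial D_i$ and $\partial D_k$; one needs the nestedness $Y_{p+1}\subset Y_p$, which caps the number of distinct curves by a number $h$ linear in $\vert\chi(X)\vert$, together with all the intermediate disks of the surgery path (consecutive ones are disjoint, giving the $d_{\cal S}=1$ steps), to chain $D_i$ to $D_k$ in at most $h$ blocks each of $d_{\cal S}$-length at most $4\kappa+5$. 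This chaining over $h$ nested subsurfaces, absent from your proposal, is what produces the quadratic polynomial. A minor additional slip: $\partial D_i$ is not carried by $\tau_i$ (so it is not a vertex cycle); it is transverse to $\tau_i$, meeting it only at the switch, which is why it meets a vertex cycle in at most two points.
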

\begin{proof} As before, let $d_{{\cal S}}$ be the distance in
${\cal S\cal D\cal G}(X)$ and let $d_{{\cal C\cal G}}$ be the distance
in ${\cal C\cal G}(X)$. 
We have to show the existence of a quadratic polynomial 
$p$ with 
the following property. If $D,E$ are any disks 
then
%such that
%no component of $X-(\partial D\cup \partial E)$ is a
%once punctured bigon then 
\[d_{{\cal S}}(D,E)\leq p(\vert \chi(X)\vert)
d_{{\cal C\cal G}}(\partial D,\partial E).\]

By Proposition \ref{nestedcarried},
there is a nested surgery path $D=D_0,\dots,D_n$ connecting
the disk $D_0=D$ to a disk $D_n$ which 
is disjoint from $E$, 
and there is a sequence $(\tau_i)_{0\leq i\leq n}$ 
of one-switch train tracks on $X$
such that $\tau_{i+1}\prec\tau_{i}$ for all $i<n$ and that
$D_i$ intersects $\tau_i$ only at the switch.

By Theorem 2.3.1 of 
\cite{PH92}, there is a splitting and shifting sequence 
$\tau_0=\eta_0\prec\dots\prec\eta_s=\tau_n$ 
connecting $\tau_0$ to $\tau_n$ and 
a sequence $0=u_0<\dots <u_n=s$ so that
$\eta_{u_q}=\tau_q$ for  $0\leq q\leq n$. 
Since the disk $D_i$ intersects $\tau_i$ only at the switch,
the boundary $\partial D_i$ of $D_i$ intersects
a vertex cycle of $\tau_i$ in at most two points and hence
the distance in the curve graph ${\cal C\cal G}(X)$ 
between $\partial D_i$ and a vertex cycle of 
$\tau_i$ is at most three. 
Now the disks $D_i$ and $D_{i+1}$ are 
disjoint and  consequently 
the distance in ${\cal C\cal G}(X)$ between a vertex
cycle of $\tau_i$ and a vertex cycle of $\tau_{i+1}$ is 
at most $7$. 

Corollary \ref{retract} implies that the map $\Upsilon$ which associates
to the train track $\eta_u$ one of its vertex cycles is 
a quasi-isometric embedding of the splitting and shifting 
sequence $(\eta_u)$, equipped with the gap distance $d_g$, 
into the curve graph of $X$. The discussion in the previous
paragraph implies that this statement also holds true for 
the restriction of $\Upsilon$ to the subsequence $(\tau_i)$ of 
$(\eta_u)$, equipped with the restriction of the gap distance. 
Thus 
by the definition of the gap distance,
it suffices to show the existence of a universal 
number $b>0$ with the
following property.
Let $k<i$ be such that
the pair $\tau_i\prec\tau_k$ is \emph{not} wide;
then $d_{{\cal S}}(D_{i},D_{k})\leq b\vert \chi(X)\vert^2$.

Since $\tau_i\prec\tau_k$ is not wide
there is a large subtrack $\tau_i^\prime$
of $\tau_i$, a diagonal extension $\zeta_i$ of $\tau_i^\prime$ and
a simple closed curve $\alpha$ carried
by $\zeta_i$ with the following property.
Let $\tau_k^\prime$ be the image of 
$\tau_i^\prime$ under a carrying map 
$\tau_i\to \tau_k$ and let
$\zeta_k$ be the diagonal extension of $\tau_k^\prime$
which is the image of $\zeta_i$ under a carrying
map induced by 
a carrying map $\tau_i^\prime\to 
\tau_k^\prime$. Then $\alpha$ does not fill 
a large subtrack of $\zeta_k$.

Since $\zeta_i$ is a diagonal extension of the
large subtrack $\tau_i^\prime$ 
of $\tau_i$ and since $D_i$ intersects
$\tau_i$ only at the switch,
the intersection number
between $\partial D_i$ and $\zeta_i$ is bounded from 
above by a constant $\kappa\geq 2$
which  does not exceed a constant multiple of the 
Euler characteristic of $X$.

For each $p\in [k,i]$, the image
of $\tau_i^\prime$ under a carrying map
$\tau_i\to \tau_p$ is 
a large subtrack $\tau_p^\prime$ 
of $\tau_p$, and
there is a diagonal extension $\zeta_p$ of 
$\tau_p^\prime$ which carries $\alpha$. 
We may assume that $\zeta_u\prec\zeta_p$ for $u\geq p$.
The disk $D_p$ intersects
$\zeta_p$ in at most $\kappa$ points.

For $p\in [k,i]$ let $\beta_p\prec\zeta_p$ be the
subtrack of $\zeta_p$ filled by $\alpha$. Then $\beta_p$ is 
connected and not large.
The union $Y_p$ of a thickening of 
$\beta_p$ with the 
components of $X-\beta_p$ which are simply connected
is a proper connected subsurface
of $X$ for all $p$.
The boundary of $Y_p$ can be realized as a 
union of simple closed curves which are
embedded in $\beta_p$ (but with cusps). 
The carrying map 
$\beta_{p+1}\to \beta_p$ maps
$Y_{p+1}$ into $Y_p$. In particular, either the
boundary of $Y_{p+1}$ coincides up to homotopy 
with the boundary of 
$Y_p$ or $Y_{p+1}$ is a \emph{proper} subsurface of 
$Y_p$. In the latter case, the Euler characteristic of 
$Y_p$ is strictly smaller than the Euler characteristic of 
$Y_{p+1}$. 
In other words, the subsurfaces
$Y_p$ are nested, and hence their number is bounded 
from above by a universal constant $h >0$ depending 
linearly on the Euler characteristic of $S$.

Since $\partial D_p$ intersects $\zeta_p$ in 
at most $\kappa$ points, the number of intersections between
$\partial D_p$ and $\partial Y_{p}$ 
is bounded from
above by $2\kappa >0$. 
As a consequence, 
there are $h$ essential simple closed curves
$c_1,\dots,c_{h}$ in $X$ 
so that for every 
$p\in[k,i]$ there is some $r(p)\in \{1,\dots,h\}$
with 
\begin{equation}\label{diskone}
\iota(\partial D_p,c_{r(p)})\leq 2\kappa.\end{equation}
Each of the curves $c_j$ is a fixed boundary component
of one of the subsurfaces $Y_p$.

By reordering, 
assume that $r(i)=1$. Let $v_1$ be the minimum of
all numbers $p\in [k,i]$ 
such that $r(v_1)=1$. 
Proposition \ref{distanceinter2}
shows that 
\begin{equation}\label{disktwo}
d_{{\cal S}}(D_{i},D_{v_1})\leq 4\kappa+4.\end{equation}
On the other hand, we have $d_{{\cal S}}(D_{v_1},D_{v_1-1})=1$.
Again by reordering, assume 
that $r(v_1-1)=2$ and repeat this construction
with the disks $D_{v_1-1},\dots,D_{k}$ and 
the curve $c_2$. 
In $a\leq h$ steps we 
construct in this way a decreasing sequence
$i\geq v_1>\dots>v_a=k$ such that
$d_{{\cal S}}(D_{v_u},D_{v_{u-1}})\leq 4\kappa+5$ for all $u\leq a$.
From (\ref{diskone}, \ref{disktwo}) we conclude that 
\begin{equation}\label{diskthree}
d_{{\cal S}}(D_{i},D_{k})\leq h(4\kappa+5).\notag\end{equation}
Together with the explicit bounds for 
$\kappa$ and $h$, this yields the theorem.
\end{proof}

\section{Gromov boundary in the case of handlebodies}\label{gromov}

A hyperbolic geodesic metric space $Y$ admits a \emph{Gromov boundary}.
This boundary is a topological space on which the
isometry group of $Y$ acts as a group of homeomorphisms.
In this section we explicitly determine 
the Gromov
boundary of the superconducting disk 
graph ${\cal S\cal D\cal G}={\cal S\cal D\cal G}(\partial H)$
for a handlebody $H$ of genus $g\geq 2$. Recall that the boundary
$\partial H$ of such a handlebody $H$ is a closed oriented
surface of genus $g$.

Let ${\cal L}$ be the space
of all \emph{geodesic laminations} on 
$\partial H$ (for some fixed hyperbolic metric)
equipped with the
\emph{coarse Hausdorff topology}. In this topology,
a sequence $(\mu_i)$ converges to a lamination
$\mu$ if every accumulation point of $(\mu_i)$ 
in the usual Hausdorff topology contains $\mu$ as a 
sublamination. Note that the coarse Hausdorff topology on 
${\cal L}$ is not
$T_0$, but its restriction to the subspace 
$\partial {\cal C\cal G}\subset {\cal L}$ of all minimal geodesic  
laminations which fill up $\partial H$ 
(i.e. which intersect every simple closed geodesic transversely)
is Hausdorff. 
The space $\partial {\cal C\cal G}$ equipped with
the coarse Hausdorff topology can naturally be identified
with the Gromov boundary  of 
the curve graph ${\cal C\cal G}$ of $\partial H$ \cite{K99,H06}.

Let ${\rm Map}(H)$ be the \emph{handlebody group}, which 
is defined to be the subgroup of the mapping class
group ${\rm Mod}(\partial H)$ of the boundary surface consisting
of all isotopy classes of diffeomorphisms which extend to 
diffeomorphisms of $H$. The group ${\rm Map}(H)$ acts on the
graph ${\cal S\cal D\cal G}$ as a group of simplicial automorphisms.

The handlebody group ${\rm Map}(H)$ also
acts on $\partial{\cal C\cal G}$ as a group of transformations
preserving the 
closed subset 
\[\partial{\cal H}\subset \partial{\cal C\cal G}\] 
of all geodesic 
laminations which are limits in the coarse Hausdorff topology
of boundaries of disks in $H$. 
It acts on the Gromov boundary $\partial {\cal S\cal D\cal G}$
of ${\cal S\cal D\cal G}$ as well.

\begin{lemma}\label{closedsubset}
The Gromov boundary of ${\cal S\cal D\cal G}$ is a 
closed ${\rm Map}(H)$-invariant subset of 
$\partial{\cal H}$.
\end{lemma}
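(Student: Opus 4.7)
The plan is to deduce the lemma from Theorem~\ref{quasi} together with the identification of $\partial {\cal C\cal G}$ with the space of minimal filling laminations under the coarse Hausdorff topology, recalled just before the lemma statement. My strategy is to prove the stronger statement that $\partial {\cal S\cal D\cal G} = \partial {\cal H}$; the three properties demanded by the lemma then all follow cheaply.

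First I would invoke the standard extension-to-boundary theorem for quasi-isometric embeddings between Gromov-hyperbolic spaces: the inclusion $\iota : {\cal S\cal D\cal G} \to {\cal C\cal G}$ of Theorem~\ref{quasi} induces an injective continuous map $\partial\iota : \partial {\cal S\cal D\cal G} \to \partial {\cal C\cal G}$. Concretely, a point $\xi \in \partial {\cal S\cal D\cal G}$ is represented by a sequence of disks $(D_n)$ with $(D_n \cdot D_m)_{\cal S} \to \infty$, and the quasi-isometry estimate from Theorem~\ref{quasi} forces $(\partial D_n \cdot \partial D_m)_{\cal C\cal G} \to \infty$, so $(\partial D_n)$ converges to some $\lambda \in \partial {\cal C\cal G}$, which we take to be $\partial\iota(\xi)$. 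Under the Klarreich--Hamenst\"{a}dt identification recalled in the paper, this limit is just the coarse Hausdorff limit of $\partial D_n$, and since every $\partial D_n$ is a disk boundary, $\lambda$ lies in $\partial {\cal H}$ by definition. This establishes the inclusion $\partial {\cal S\cal D\cal G} \subset \partial {\cal H}$.

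For the reverse inclusion I would run the same argument backwards: given $\lambda \in \partial {\cal H}$, choose disks $(E_n)$ with $\partial E_n \to \lambda$ in the coarse Hausdorff topology. Because $\lambda$ is minimal and filling (as an element of $\partial {\cal C\cal G}$), Klarreich--Hamenst\"{a}dt upgrades this to $(\partial E_n \cdot \partial E_m)_{\cal C\cal G} \to \infty$, and then Theorem~\ref{quasi} pulls this back to $(E_n \cdot E_m)_{\cal S} \to \infty$, so $(E_n)$ determines a point of $\partial {\cal S\cal D\cal G}$ mapping to $\lambda$ under $\partial\iota$. Hence $\partial {\cal S\cal D\cal G} = \partial {\cal H}$ set-theoretically, and closedness in $\partial {\cal H}$ is automatic. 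The ${\rm Map}(H)$-invariance is then immediate, since the handlebody group acts on ${\cal S\cal D\cal G}$ by simplicial automorphisms and $\partial\iota$ intertwines the induced action on $\partial {\cal S\cal D\cal G}$ with the natural ${\rm Map}(H)$-action on $\partial {\cal C\cal G}$.

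The only delicate point is that ${\cal S\cal D\cal G}$ is locally infinite, hence not a proper metric space, so arguments relying on compactness of its Gromov boundary are unavailable; but every convergence statement above is phrased purely in terms of Gromov products and the quasi-isometric embedding into ${\cal C\cal G}$, so non-properness plays no role. The real work is already contained in Theorem~\ref{quasi} and in the Klarreich--Hamenst\"{a}dt identification, both of which are in hand at this point in the paper.
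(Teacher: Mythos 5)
Your proposal is correct in substance, but it takes a genuinely different (and more ambitious) route than the paper. The paper's proof of this lemma establishes only the three stated properties: the containment $\partial{\cal S\cal D\cal G}\subset\partial{\cal H}$ is obtained, much as in your first paragraph, from Theorem \ref{quasi} together with the boundary identification, quoted there only in the form that a quasi-geodesic ray in ${\cal C\cal G}$ has endpoint $\nu$ if and only if its vertices converge coarse-Hausdorff to $\nu$; invariance is immediate; and closedness is proved by a direct thin-triangle argument: if $\nu_i\in\partial{\cal S\cal D\cal G}$ converge to $\nu$, then uniform quasi-geodesics in ${\cal S\cal D\cal G}$ from a fixed disk to the $\nu_i$ fellow-travel longer and longer initial segments of a ray to $\nu$, so this ray stays in a bounded neighborhood of ${\cal S\cal D\cal G}$ and $\nu\in\partial{\cal S\cal D\cal G}$. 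You instead prove the equality $\partial{\cal S\cal D\cal G}=\partial{\cal H}$ outright, which the paper only obtains later (Corollary \ref{gromovsuper}) by combining this lemma with minimality of the ${\rm Map}(H)$-action (Lemma \ref{minimal}, following Masur). Your reverse inclusion does work, but you should make explicit the two inputs carrying it: (i) the sequence form of the Klarreich--Hamenst\"adt identification, namely that coarse Hausdorff convergence of curves to a minimal filling lamination is equivalent to convergence in ${\cal C\cal G}\cup\partial{\cal C\cal G}$ --- this is stronger than the quasi-geodesic-ray statement the paper quotes from \cite{H06}, though it is available in \cite{K99,H06}; and (ii) the fact that a quasi-isometric embedding between geodesic hyperbolic spaces coarsely preserves Gromov products in \emph{both} directions (via stability of quasi-geodesics), so that divergence of $(\partial E_n\cdot\partial E_m)$ in ${\cal C\cal G}$ really does pull back to divergence of the corresponding products in ${\cal S\cal D\cal G}$; your remark on non-properness is apt, since the sequential definition of the boundary is what makes this work. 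With these points made precise your argument is valid and in fact renders the dynamical step of Lemma \ref{minimal} unnecessary for identifying the boundary; what the paper's route buys is that it needs only the weaker ray-version of the boundary identification, and the minimality statement it uses instead has independent interest.
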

\begin{proof} Since by Theorem \ref{quasi}
the vertex inclusion ${\cal S\cal D\cal G}\to
{\cal C\cal G}$ defines a quasi-isometric embedding,
the Gromov boundary of ${\cal S\cal D\cal G}$ is the subset of
the Gromov boundary of ${\cal C\cal G}$ of all endpoints of
quasi-geodesic rays in ${\cal C\cal G}$ 
which are contained in 
${\cal S\cal D\cal G}$. 

By the main result of 
\cite{H06} (see \cite{K99} for an earlier account of a similar statement),
a simplicial quasi-geodesic ray $\gamma:[0,\infty)\to{\cal C\cal G}$
defines the endpoint lamination $\nu\in \partial{\cal C\cal G}$ if and
only if the curves $\gamma(i)$ converge as 
$i\to \infty$ in the coarse Hausdorff topology to
$\nu$. As a consequence, the Gromov boundary 
of ${\cal S\cal D\cal G}$ is a subset of 
$\partial{\cal H}$, and this subset is 
clearly ${\rm Map}(H)$-invariant.

We are left with showing that the Gromov boundary of ${\cal S\cal D\cal G}$
 is a closed subset of $\partial {\cal C\cal G}$. To this end note
 that by Theorem \ref{quasi}, 
there is a number $p>1$ such that 
 for every $L>1$, any $L$-quasi-geodesic in  
 ${\cal S\cal D\cal G}$ is an $Lp$-quasi-geodesic in ${\cal C\cal G}$.
 Moreover, for a suitable choice
 of $p$, any vertex in ${\cal S\cal D\cal G}$ can be connected
 to any point in the Gromov boundary of ${\cal S\cal D\cal G}$ by
 a $p$-quasi-geodesic.
 
Now let $(\nu_i)$ be a sequence
in the Gromov boundary of ${\cal S\cal D\cal G}$ 
which converges in $\partial {\cal C\cal G}$ 
to a lamination $\nu$.
Let $\partial D$ be the boundary of a disk and let 
$\gamma:[0,\infty)\to {\cal C\cal G}$ be 
a quasi-geodesic ray issuing from
$\gamma(0)=\partial D$ with endpoint $\nu$. 
By hyperbolicity of ${\cal C\cal G}$ and by the
discussion in the previous paragraph, 
there is a number $R>0$ and 
for every $k\geq 0$ there is some $i(k)>0$ such that
a $p$-quasi-geodesic in 
${\cal S\cal D\cal G}\subset {\cal C\cal G}$ connecting
$\gamma(0)$ to $\nu_{i(k)}$ passes through the
$R$-neighborhood of $\gamma(k)$
in ${\cal C\cal G}$. Since $k>0$ was arbitrary,
this implies that the entire quasi-geodesic ray $\gamma$ is contained
in the $R$-neighborhood of the subset ${\cal S\cal D\cal G}$ of 
${\cal C\cal G}$. Using once more hyperbolicity, we conclude that
there is a quasi-geodesic ray in ${\cal C\cal G}$ 
connecting $\gamma(0)$ to $\nu$ which is entirely 
contained in 
${\cal S\cal D\cal G}$. But this just means that $\nu$ is 
contained in the Gromov boundary of ${\cal S\cal D\cal G}$.
\end{proof}

By naturality, the action of 
the handlebody group ${\rm Map}(H)$ on the
Gromov boundary $\partial {\cal S\cal D\cal G}$ 
of ${\cal S\cal D\cal G}$ 
is compatible
with the action of the mapping class group 
on the Gromov boundary of the curve
graph. From Lemma \ref{closedsubset}
and the following observation
(which is essentially contained in 
Theorem 1.2 of \cite{M86}), we conclude that
$\partial{\cal H}$ is indeed the Gromov boundary of 
${\cal S\cal D\cal G}$.

\begin{lemma}\label{minimal}
The action of the handlebody group ${\rm Map}(H)$ on 
$\partial {\cal S\cal D\cal G}$ is minimal. 
\end{lemma}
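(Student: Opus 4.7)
The plan is to deduce the lemma from two facts: (a) the action of ${\rm Map}(H)$ on the vertex set of ${\cal S\cal D\cal G}$ has only finitely many orbits, so that the orbit of any single disk is coarsely dense in ${\cal S\cal D\cal G}$; and (b) ${\rm Map}(H)$ contains pseudo-Anosov mapping classes, which by Theorem \ref{quasi} act loxodromically on ${\cal S\cal D\cal G}$. Granted these, the lemma reduces to the standard dynamical principle that a non-elementary isometric action on a Gromov hyperbolic space acts minimally on its limit set in the Gromov boundary, and that this limit set is all of the boundary whenever a single orbit is coarsely dense in the space.

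For (a), essential disks in $H$ are classified up to isotopy by whether they are separating and, if so, by the genera of the two complementary handlebodies, so there are only finitely many ${\rm Map}(H)$-orbits $O_1,\dots,O_s$ on the vertex set. Since ${\cal S\cal D\cal G}$ and ${\cal D\cal G}$ share the same vertex set and ${\cal D\cal G}$ is connected by Lemma \ref{fellowtravel}, ${\cal S\cal D\cal G}$ is connected as well. For every pair $(i,j)$ one can therefore choose representatives at some finite $d_{\cal S}$-distance $r_{ij}$; applying an arbitrary $\phi\in{\rm Map}(H)$ to such a pair shows that every vertex of $O_j$ is within $d_{\cal S}$-distance $r_{ij}$ of $O_i$. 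Hence for any fixed reference disk $D_0$, the orbit ${\rm Map}(H)\cdot D_0$ is $R$-dense in ${\cal S\cal D\cal G}$ with $R=\max_{i,j}r_{ij}$.

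For (b), ${\rm Map}(H)$ contains pseudo-Anosov mapping classes on $\partial H$; this is classical (compare Casson, or use Thurston's construction with a filling collection of disk-bounding curves). Any such $\phi$ acts loxodromically on ${\cal C\cal G}(\partial H)$ \cite{MM99}, and the quasi-isometric embedding of Theorem \ref{quasi} therefore forces $\phi$ to act loxodromically on ${\cal S\cal D\cal G}$, with two distinct fixed points $\phi^{\pm}\in\partial{\cal S\cal D\cal G}$ of north-south type. Two independent such pseudo-Anosovs witness a non-elementary action of ${\rm Map}(H)$ on ${\cal S\cal D\cal G}$.

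It remains to combine (a) and (b). Given any $\nu\in\partial{\cal S\cal D\cal G}$, take a sequence of disks $E_n\to\nu$ in the Gromov compactification; by (a) there exist $\psi_n\in{\rm Map}(H)$ with $d_{\cal S}(\psi_n D_0,E_n)\leq R$, and since a uniform bound preserves boundary limits we have $\psi_n D_0\to \nu$ as well. Thus $\nu$ is an accumulation point of ${\rm Map}(H)\cdot D_0$ and therefore lies in the limit set of the ${\rm Map}(H)$-action on ${\cal S\cal D\cal G}$. Hence the limit set is all of $\partial{\cal S\cal D\cal G}$, and by (b) and standard hyperbolic dynamics the action on it is minimal. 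The main obstacle is the verification of (a): one must check that topologically equivalent disks really can be interchanged by a handlebody homeomorphism, so that orbit types are fully determined by the complementary pair; once this is in place, the hyperbolic-dynamics conclusion is routine.
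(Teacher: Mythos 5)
Your argument is correct in outline, but it follows a genuinely different route from the paper. The paper works directly with laminations: it takes a sequence of disk boundaries converging in the coarse Hausdorff topology to a lamination in $\partial{\cal H}$, observes that any disk disjoint from a disk in the sequence has boundary converging to the same minimal filling lamination, notes that every separating disk admits a disjoint non-separating one, and concludes that every point of $\partial{\cal H}$ is a limit of the orbit of a single non-separating disk, using that ${\rm Map}(H)$ acts transitively on non-separating disks. You instead prove cocompactness of the action on the vertex set (finitely many disk orbits under the change-of-coordinates principle, so one orbit is $R$-dense), produce loxodromic elements on ${\cal S\cal D\cal G}$ from pseudo-Anosov elements of ${\rm Map}(H)$ via the quasi-isometric embedding of Theorem \ref{quasi}, and invoke the general fact that a non-elementary action on a hyperbolic space is minimal on its limit set, which here is the whole boundary. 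This does prove the lemma as stated; its costs are that the dynamical principle for non-proper hyperbolic spaces is used as a black box (and the existence of two independent pseudo-Anosov elements in ${\rm Map}(H)$ is only asserted), and your phrase ``classified up to isotopy'' should read ``classified up to the action of ${\rm Map}(H)$''. What the paper's more hands-on argument buys, beyond being self-contained, is a strictly stronger statement: it exhibits every lamination in $\partial{\cal H}$ (not merely in $\partial{\cal S\cal D\cal G}$) as an accumulation point of a single disk orbit, which is exactly what is combined with Lemma \ref{closedsubset} to identify $\partial{\cal S\cal D\cal G}$ with $\partial{\cal H}$ in Corollary \ref{gromovsuper}; your softer argument, while valid for the lemma itself, would not by itself feed into that identification.
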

\begin{proof} Let 
$(\partial D_i)$ be a sequence of boundaries of 
disks $D_i$ converging in the 
coarse Hausdorff topology to a geodesic lamination
$\mu\in \partial{\cal H}$. For each 
$i$ let $E_i$ be a disk which is disjoint from $D_i$. 
Since the space of geodesic laminations equipped with the
usual Hausdorff topology is compact, 
up to passing to a subsequence
the sequence $(\partial E_i)$ converges in the Hausdorff topology
to a geodesic lamination $\nu$ which does not intersect $\mu$
(we refer to \cite{K99,H06} for details of this argument).
Now
$\mu$ is minimal and 
fills up $\partial H$ and therefore  the lamination $\nu$ contains
$\mu$ as a sublamination. This just  means that $(\partial E_i)$
converges in the coarse  Hausdorff
topology to $\mu$.

Since the genus of $H$ is at least two, 
for every separating 
disk in $H$ we can find a disjoint non-separating disk.
Thus the discussion in the previous paragraph 
shows that  
every $\mu\in \partial{\cal H}$ is a limit 
in the coarse Hausdorff topology 
of a sequence of
non-separating disks. However, the handlebody group 
acts transitively on non-separating disks.
Minimality of the action of ${\rm Map}(H)$ on 
$\partial{\cal S\cal D\cal G}$ follows.
\end{proof}

As an immediate consequence of Lemma \ref{closedsubset}
and Lemma \ref{minimal} we obtain

\begin{corollary}\label{gromovsuper}
$\partial {\cal H}$ 
is the Gromov boundary of ${\cal S\cal D\cal G}$.
\end{corollary}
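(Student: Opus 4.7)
The plan is to deduce $\partial {\cal S\cal D\cal G} = \partial {\cal H}$ as a formal consequence of the two preceding lemmas. From Lemma \ref{closedsubset} I already have the inclusion $\partial {\cal S\cal D\cal G} \subseteq \partial {\cal H}$ together with the fact that $\partial {\cal S\cal D\cal G}$ is closed and ${\rm Map}(H)$-invariant. I also need that $\partial {\cal S\cal D\cal G}$ is nonempty; this follows from Theorem \ref{quasi}, which furnishes a quasi-isometric embedding of ${\cal S\cal D\cal G}$ into the infinite diameter hyperbolic graph ${\cal C\cal G}(\partial H)$ (an unbounded orbit is produced, for example, by iterating a pseudo-Anosov element of the handlebody group on a fixed disk), so ${\cal S\cal D\cal G}$ is itself unbounded and hyperbolic, whence its Gromov boundary is nonempty.

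For the reverse inclusion $\partial {\cal H} \subseteq \partial {\cal S\cal D\cal G}$ I will use Lemma \ref{minimal}. Fix once and for all some $\nu \in \partial {\cal S\cal D\cal G}$ and pick an arbitrary $\mu \in \partial {\cal H}$. By the argument in the proof of Lemma \ref{minimal}, $\mu$ is a coarse Hausdorff limit of boundaries of non-separating disks, and by transitivity of ${\rm Map}(H)$ on non-separating disks this sequence can be written as $\phi_i(D_0)$ for a fixed non-separating disk $D_0$ and some $\phi_i \in {\rm Map}(H)$. The key point I will then invoke is a standard feature of isometric actions on hyperbolic spaces: because $d_{\cal S}(\phi_i(D_0), \phi_i(\nu)) = d_{\cal S}(D_0, \nu)$ is bounded independently of $i$, Gromov products computed at a fixed basepoint differ by $O(1)$ along the two sequences, so convergence of $\phi_i(D_0)$ to $\mu$ in the Gromov boundary of ${\cal S\cal D\cal G}$ forces $\phi_i(\nu) \to \mu$ as well. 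Since every $\phi_i(\nu) \in \partial {\cal S\cal D\cal G}$ by invariance and since $\partial {\cal S\cal D\cal G}$ is closed in $\partial {\cal H}$, this gives $\mu \in \partial {\cal S\cal D\cal G}$, as required.

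The main obstacle is verifying that the coarse Hausdorff convergence $\phi_i(D_0) \to \mu$ of disks actually entails convergence in the Gromov boundary of ${\cal S\cal D\cal G}$, not only of $\partial{\cal C\cal G}$. This is handled by the identification of $\partial {\cal S\cal D\cal G}$ as the set of endpoints of quasi-geodesic rays in ${\cal C\cal G}$ that are contained in ${\cal S\cal D\cal G}$, as already established in the proof of Lemma \ref{closedsubset}, together with the Klarreich/Hamenst\"adt description of $\partial {\cal C\cal G}$ as the space of minimal filling laminations with the coarse Hausdorff topology; granting these, convergence at the level of laminations translates into convergence at infinity in ${\cal C\cal G}$, and then back into convergence at infinity in ${\cal S\cal D\cal G}$ via the quasi-isometric embedding of Theorem \ref{quasi}. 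With this in hand, the two inclusions combine to yield the corollary.
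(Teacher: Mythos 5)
The first half of your argument is fine: Lemma \ref{closedsubset} gives that $\partial {\cal S\cal D\cal G}$ is a closed, ${\rm Map}(H)$-invariant subset of $\partial{\cal H}$, and nonemptiness via a pseudo-Anosov element of ${\rm Map}(H)$ acting through the quasi-isometric embedding of Theorem \ref{quasi} is acceptable. The key step of your reverse inclusion, however, does not work as written. You fix $\nu\in\partial {\cal S\cal D\cal G}$ and argue that because ``$d_{\cal S}(\phi_i(D_0),\phi_i(\nu))=d_{\cal S}(D_0,\nu)$ is bounded'', the convergence $\phi_i(D_0)\to\mu$ forces $\phi_i(\nu)\to\mu$. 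But $\nu$ is a point at infinity, so this quantity is not a finite number, and the inference you call a standard feature of isometric actions on hyperbolic spaces is false in general: if $\phi$ is a hyperbolic isometry with attracting and repelling fixed points $\xi^+,\xi^-$ and $\phi_i=\phi^i$, then $\phi_i(x_0)\to\xi^+$ for every base vertex $x_0$ while $\phi_i(\xi^-)=\xi^-$ for all $i$. Nothing in your construction (the $\phi_i$ are only known to carry a fixed non-separating disk $D_0$ to disks whose boundaries converge to $\mu$) rules out exactly this behaviour, so $\phi_i(\nu)\to\mu$ is unjustified and the reverse inclusion is not established.

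The detour through $\nu$ and equivariance is in fact unnecessary; what you sketch in your last paragraph already suffices when applied to the disks themselves. Given $\mu\in\partial{\cal H}$, the proof of Lemma \ref{minimal} provides non-separating disks $D_i$ with $\partial D_i\to\mu$ in the coarse Hausdorff topology; by the Klarreich--Hamenst\"adt identification of $\partial{\cal C\cal G}$ this means $\partial D_i\to\mu$ in ${\cal C\cal G}\cup\partial{\cal C\cal G}$, so the Gromov products of the $\partial D_i$ at a base vertex diverge. Since by Theorem \ref{quasi} the vertex inclusion ${\cal S\cal D\cal G}\to{\cal C\cal G}$ is a quasi-isometric embedding, the corresponding Gromov products in ${\cal S\cal D\cal G}$ diverge as well, hence $(D_i)$ converges to a point of $\partial{\cal S\cal D\cal G}$ whose image in $\partial{\cal C\cal G}$ is $\mu$, i.e. $\mu\in\partial{\cal S\cal D\cal G}$. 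This is essentially how the paper intends the corollary to follow: it is recorded as an immediate consequence of Lemma \ref{closedsubset} (a closed invariant subset of $\partial{\cal H}$) and Lemma \ref{minimal}, where the content of Lemma \ref{minimal} is used at the level of sequences of disk boundaries converging to an arbitrary point of $\partial{\cal H}$, not by transporting a fixed boundary point $\nu$ by the group elements $\phi_i$.
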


 \section{Disk graphs for surfaces with 
 compressible boundary}\label{disksforahandle}

In this final section we show that the results from Sections 2-4 
are not valid for graphs of disks in an oriented 3-manifold $M$
with boundary in a compact 
subsurface $X$ of the boundary of $M$ of genus $g\geq 2$, with 
connected 
compressible boundary. 
As before, disks are required to be essential in $M$, and their
boundaries are required to be essential curves in $X$, in 
particular their boundaries are not allowed to be homotopic to
a boundary component of $X$. 
We continue to use the terminology from Section 2.
Theorem \ref{spot} from the introduction.

Let $X_0$ be obtained from
$X$ by capping off the boundary $\partial X$ 
(i.e. identify the boundary $\partial X$ with a single point). 
Note that 
$X_0$ can be viewed as 
a subsurface of the boundary of a submanifold $M_0$ of $M$
with boundary. There exists a natural map $\Phi:X\to X_0$.  

Let ${\cal C\cal G}(X)$ be the curve graph of $X$
and 
let ${\cal C\cal G}(X_0)$ be the curve graph of $X_0$.
The following simple and well known fact is the essential 
feature that distinguishes the case of a single compressible
boundary component from the case of more than one
compressible boundary components.

\begin{lemma}\label{treebundle} 
The map $\Phi$ induces a simplicial surjection 
\[\Pi:{\cal C\cal G}(X)\to {\cal C\cal G}(X_0)\]
which maps diskbounding curves to diskbounding curves.
\end{lemma}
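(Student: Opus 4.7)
The plan is to establish three assertions in sequence: that $\Pi$ is well-defined on vertices, that it is simplicial and surjective, and that it sends disk-bounding curves to disk-bounding curves. The essential content lies in the well-definedness step.

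First I would define $\Pi$ on vertices. Any essential simple closed curve $c\subset X$ is neither null-homotopic nor isotopic to $\partial X$, so it may be isotoped to lie in $X\setminus\partial X$. Since $\Phi$ restricts to a homeomorphism from $X\setminus\partial X$ onto $X_0\setminus\{p_0\}$ (where $p_0$ is the image of $\partial X$), the image $\Phi(c)$ is an embedded simple closed curve in $X_0$. The crucial point is that $\Phi(c)$ is essential. I would verify this via the following geometric dichotomy: if $\Phi(c)$ were to bound an embedded disk $\Delta\subset X_0$, then either $p_0\notin\Delta$, in which case $\Delta$ lifts through $\Phi$ to a disk in $X$ bounded by $c$ and so $c$ is null-homotopic; or $p_0\in\Delta$, in which case $\Delta\setminus\{p_0\}$ lifts to an embedded annulus in $X$ cobounded by $c$ and $\partial X$, so $c$ is peripheral. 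Both conclusions contradict essentiality of $c$. This is the step I expect to be the main obstacle, although it is the standard ``capping off a single boundary component'' observation, using critically that $X$ has only one boundary component.

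Given well-definedness, the simplicial property follows at once: two disjoint essential curves can be simultaneously isotoped into $X\setminus\partial X$, and $\Phi$ being a homeomorphism there preserves disjointness. For surjectivity, I would take an essential curve $c'\subset X_0$, isotope it off $p_0$, and set $c=\Phi^{-1}(c')\subset X\setminus\partial X$; running the above dichotomy in reverse shows that if $c$ were null-homotopic or peripheral in $X$, then $c'$ would be null-homotopic in $X_0$, so $c$ must be essential and $\Pi(c)=c'$.

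Finally, preservation of disk-bounding curves. Given an essential disk $D\subset M$ with $\partial D\subset X$, a standard innermost-disk argument---using that $\partial D$ is disjoint from $\partial X=\partial D_0$, where $D_0$ is a compression disk for $\partial X$ used to build $M_0$, and that every circle of $D\cap D_0$ bounds subdisks on both $D$ and $D_0$ which can be swapped---allows one to isotope $D$ to be disjoint from $D_0$. After this isotopy $D$ lies inside $M_0$, with boundary still equal to $\Phi(\partial D)=\Pi(\partial D)$ under the identification of $X\setminus\partial X$ with $X_0\setminus\{p_0\}$. Hence $\Pi(\partial D)$ is disk-bounding in $(M_0,X_0)$, completing the proof.
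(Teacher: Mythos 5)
Your proposal is correct and follows essentially the same route as the paper, whose proof simply asserts the three facts (essential curves map to essential curves, diskbounding is preserved, disjointness is preserved) that you verify; your capping-off dichotomy and the innermost-circle argument with a compressing disk for $\partial X$ are exactly the standard details behind those assertions. The only cosmetic point is that the innermost-circle step produces a new embedded disk with the same boundary (surgery rather than an isotopy of $D$), which is all that is needed.
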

\begin{proof} Since $\partial X$ has connected compressible boundary,
the image under the map $\Phi$ of an essential simple closed 
curve $\gamma$ 
on $X$ is an essential simple closed
curve $\Phi(\gamma)$ on $X_0$. The curve 
$\gamma$ is diskbounding if and only if this is the
case for $\Phi(\gamma)$. Moreover, if $\gamma,\delta$ are disjoint
then this holds true for $\Phi(\gamma),\Phi(\delta)$ as well.
This immediately implies the lemma.
\end{proof}

The special property of a boundary surface with connected compressible
boundary 
which enters Lemma \ref{treebundle} 
is also reflected in the fact that we can use surgery of disks
to construct paths in the disk graph which reduce distances.
Namely, for any two disks $D,E$ 
with boundary in $X$ 
which are not disjoint
and for any outer component $E^\prime$ of $E-D$, at
least one of the disks obtained from $D$ by surgery at 
$E^\prime$ is not peripheral. Thus if we
denote as before by $d_{\cal D}$ and $d_{\cal E}$ the 
distance in the disk graph and the electrified disk graph,
then the proof of
Lemma \ref{fellowtravel} yields the following

\begin{lemma}\label{firstestimate0}
If $X$ has connected compressible boundary 
then for any disks  
$D,E$ in $M$ we have 
 \[d_{{\cal D}}(D,E) \leq \iota(\partial D,\partial E)/2+1.\] 
\end{lemma}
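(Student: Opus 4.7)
The plan is to prove the estimate by induction on $n=\iota(\partial D,\partial E)$, closely mirroring the argument in Lemma \ref{fellowtravel} but without relying on halving. The base case $n=0$ is trivial: the boundaries $\partial D$ and $\partial E$ are disjoint, so $d_{\cal D}(D,E)\leq 1=n/2+1$.

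For the inductive step I would place $D,E$ in normal position, pick any outer component $E^\prime$ of $E-D$, and let $\alpha=E^\prime\cap X$ be the corresponding outer arc of $\partial E$. Its two endpoints lie in $\partial D\cap\partial E$ and split $\partial D$ into subarcs $\beta_1,\beta_2$. Simple surgery of $D$ along $E^\prime$ then produces two disks $Q_1,Q_2$, disjoint from $D$ (and from each other) up to isotopy, whose boundaries are respectively the concatenations of $\beta_i$ with a pushoff $\alpha^\prime$ of $\alpha$ which can be arranged disjoint from $\partial E$.

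The crucial input is the remark quoted just above the lemma: because $\partial X$ is a single compressible curve, at least one of $Q_1,Q_2$ has non-peripheral boundary, hence represents a vertex of ${\cal D\cal G}(X)$. Fix such a $Q_i$. Since $\alpha^\prime$ avoids $\partial E$ and the $n-2$ remaining points of $\partial D\cap\partial E$ are distributed between the interiors of $\beta_1$ and $\beta_2$, one obtains
\[\iota(\partial Q_i,\partial E)\leq n-2.\]
Combining $d_{\cal D}(D,Q_i)\leq 1$ with the inductive hypothesis $d_{\cal D}(Q_i,E)\leq (n-2)/2+1$ via the triangle inequality gives $d_{\cal D}(D,E)\leq n/2+1$, which closes the induction.

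The main conceptual obstacle here, compared with Lemma \ref{fellowtravel}, is the loss of the freedom to choose the surgery disk with the smaller intersection count with $\partial E$. In the incompressible setting one could always take the half with fewer intersections, driving the intersection number down by a factor of two at each step and producing a logarithmic bound. In the present setting we are forced to pick the essential $Q_i$, and this disk may well be the one carrying the larger share of the intersections; this is precisely why only the linear bound $n/2+1$ is available, and it is in this sense sharp.
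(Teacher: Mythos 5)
Your proof is correct and follows essentially the same route as the paper: the paper simply observes that with a single compressible boundary component at least one of the two surgered disks is non-peripheral, and then reruns the surgery induction of Lemma \ref{fellowtravel}, losing the halving but still reducing $\iota(\partial D,\partial E)$ by at least $2$ per surgery, exactly as you do. Your closing remark correctly identifies why only the linear bound survives in this setting.
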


The difficulty that only one of the two 
possible choices for simple surgery may be essential
is reflected in the following

\begin{proposition}\label{notquasicon}
The graph of disks with boundary in $X$ 
is not a quasi-convex subset of the 
curve graph of $X$.
\end{proposition}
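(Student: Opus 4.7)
The plan is to produce, for each $N$, a pair of disks $D_N,E_N\subset M$ with boundaries in $X$ satisfying
\[
d_{{\cal C\cal G}(X)}(\partial D_N,\partial E_N)\le C\quad\text{while}\quad d_{{\cal D\cal G}(X)}(D_N,E_N)\ge N,
\]
for a constant $C$ independent of $N$. Such a pair contradicts quasi-convexity, because a quasi-convex subset of a Gromov hyperbolic geodesic metric space is necessarily quasi-isometrically embedded via its induced path metric; so exhibiting disk boundaries that are bounded in ${\cal C\cal G}(X)$ but arbitrarily distant in ${\cal D\cal G}(X)$ rules out every possible quasi-convexity constant.

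The disks will be produced by importing the Masur--Schleimer ``hole'' phenomenon from the capped setting. Applying \cite{MS13} to $(M_0,X_0)$, where $X_0$ is the closed surface obtained by capping the boundary of $X$, produces disks $D_N,E_N\subset M_0$ with boundaries $c_N,c_N'\subset X_0$ satisfying $d_{{\cal C\cal G}(X_0)}(c_N,c_N')\le C$ and $d_{{\cal D\cal G}(X_0)}(D_N,E_N)\ge N$. Since $M_0\subset M$, these may be viewed as disks in $M$; after isotopy their boundaries avoid the capping disk $D_\alpha$, so that $\partial D_N,\partial E_N\subset X$ are specific preimages of $c_N,c_N'$ under $\Pi$.

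The lower bound on the disk-graph distance is a consequence of Lemma~\ref{treebundle}: the map ${\cal D\cal G}(X)\to{\cal D\cal G}(X_0)$ sending a disk to the $M_0$-disk bounded by the $\Pi$-image of its boundary is $1$-Lipschitz, because consecutively disjoint disks in $M$ have consecutively disjoint boundaries in $X$, which project to consecutively disjoint boundaries in $X_0$ that then bound pairwise disjoint disks in the irreducible submanifold $M_0$. Hence any ${\cal D\cal G}(X)$-path from $D_N$ to $E_N$ projects to a ${\cal D\cal G}(X_0)$-path of the same length from $D_N$ to $E_N$, giving $d_{{\cal D\cal G}(X)}(D_N,E_N)\ge N$. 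For the upper bound on the curve-graph distance, I take a ${\cal C\cal G}(X_0)$-geodesic from $c_N$ to $c_N'$ (of length $\le C$) whose endpoints are the specific representatives $\partial D_N,\partial E_N$ already sitting in $X$, and perturb each intermediate curve off $D_\alpha$ while preserving the disjointness pattern with its two neighbours; this yields a ${\cal C\cal G}(X)$-path of length $\le C$ between $\partial D_N$ and $\partial E_N$.

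The main obstacle is matching the lifts coherently in the third step: the disks $D_N,E_N$ pin down specific curves $\partial D_N,\partial E_N\subset X$, and one must verify that a short ${\cal C\cal G}(X_0)$-path between $c_N$ and $c_N'$ can be realised inside $X$ with exactly those two curves as endpoints. This is a general-position argument, achieved by fixing $\partial D_N,\partial E_N$ as the chosen representatives of $c_N,c_N'$ in $X_0$ and picking representatives of the intermediate isotopy classes that miss $D_\alpha$ and stay disjoint from their neighbours; the argument is soft but requires attention to the possibly multi-valued section of $\Pi$.
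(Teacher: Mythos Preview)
Your approach has a fatal gap at the very first step. You assert that ``a quasi-convex subset of a Gromov hyperbolic geodesic metric space is necessarily quasi-isometrically embedded via its induced path metric,'' and then aim to contradict quasi-convexity by exhibiting disks that are close in ${\cal C\cal G}(X)$ but far in ${\cal D\cal G}(X)$. That implication is false when the ambient space is not locally finite. Indeed, this is precisely the phenomenon highlighted in the introduction of the paper: for a handlebody with $X=\partial H$ (empty boundary), the disk graph \emph{is} quasi-convex in ${\cal C\cal G}(X)$ by \cite{MM04}, yet the inclusion is \emph{not} a quasi-isometric embedding by \cite{MS13}. The ``holes'' you are importing from \cite{MS13} witness the failure of quasi-isometric embedding, not the failure of quasi-convexity; transplanting them to $X$ therefore proves nothing about quasi-convexity. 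Your third paragraph is also problematic: the curve-graph lift you describe need not exist, since two different $\Pi$-preimages of the same curve in $X_0$ can be at arbitrary distance in ${\cal C\cal G}(X)$ (they differ by point-pushing), so there is no soft general-position argument producing a short path in ${\cal C\cal G}(X)$ with the \emph{specified} endpoints $\partial D_N,\partial E_N$.

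The paper's proof takes a genuinely different route. It works directly with the definition of quasi-convexity: it produces two diskbounding curves $\beta,\beta_k$ in $X$ such that the ${\cal C\cal G}(X)$-geodesic between them passes arbitrarily far from the set of diskbounding curves. The mechanism is the Birman exact sequence. A filling loop $\gamma\in\pi_1(X_0,p)$ gives a pseudo-Anosov point-pushing map $\Psi(\gamma)$ on $X$, which preserves the disk set (since $\Pi\circ\Psi(\gamma)=\Pi$). One then chooses a pseudo-Anosov $\phi$ of $X_0$ fixing $p$ whose quasi-axis in ${\cal C\cal G}(X_0)$ diverges from the disk set; conjugating, $\phi^k\Psi(\gamma)\phi^{-k}=\Psi(\phi^k\gamma)$ is again point-pushing, hence takes a diskbounding $\beta$ to a diskbounding $\beta_k$, but a ${\cal C\cal G}(X)$-geodesic from $\beta$ to $\beta_k$ must fellow-travel the quasi-axis $\phi^k\zeta$, which for large $k$ lies far from the disk set. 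The key conceptual point---absent from your proposal---is that one must force the ambient geodesic itself to leave the disk set, not merely show that the intrinsic disk-graph distance is large.
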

\begin{proof} The curve graph ${\cal C\cal G}(X)$ 
of $X$ is hyperbolic,
and pseudo-Anosov elements of the mapping class
group of $X$ act as hyperbolic isometries. 

Let $p$ be the image of the boundary of $X$ under the 
map $\Phi:X\to X_0$. 
View the point  $p$ in as a basepoint for
the fundamental group of $X_0$. 
Denote by $\Gamma$ the quotient of the mapping class
group of $X$ by its center, which is just the mapping
class group of a surface of genus $g$ with one marked point. 
In the \emph{Birman exact sequence}
\begin{equation}\label{birman}
0\to \pi_1(\partial H_0,p)\to \Gamma \to 
{\rm Mod}(X_0)\to 0,\end{equation}
an element $\gamma\in \pi_1(X_0,p)$ 
is mapped to a so-called \emph{point-pushing map}
$\Psi(\gamma)\in \Gamma$.
If $\gamma\in \pi_1(X_0,p)$
is \emph{filling}, i.e. if $\gamma$ decomposes $X_0$
into disks, then the image $\Psi(\gamma)$ of $\gamma$ in
$\Gamma$ via the Birman exact sequence is 
pseudo-Anosov \cite{Kr81,KLS09}.

Let $\phi$ be a diffeomorphism of $X_0$
which fixes the basepoint $p$ and which 
defines a pseudo-Anosov element of 
${\rm Mod}(X_0)$. We require that a quasi-axis $\zeta$ for
the action of $\phi$ on ${\cal C\cal G}(X_0)$  
passes uniformly near the boundary 
of a disk and that moreover  
for any diskbounding simple closed
curve $\zeta$, the distance in ${\cal C\cal G}(X_0)$ between
$\phi^k\zeta$ and the quasi-convex 
subset of diskbounding curves tends to infinity
as $k\to\infty$. Note that the results from Section 4 apply
to the graph of disks in the 3-manifold $M_0$ with boundary
in $X_0$. 

Such a pseudo-Anosov element can be found as follows.
Each pseudo-Anosov element fixes two projective 
measured laminations which fill up $X_0$. This  
means that the complementary
components of the lamination are all simply connected. 
The set of pairs of such fixed points is dense in
${\cal P\cal M\cal L}\times {\cal P\cal M\cal L}$ 
(here ${\cal P\cal M\cal L}$ denote the Thurston sphere of 
projective measured geodesic laminations on $X_0$).
The closure in ${\cal P\cal M\cal L}$
of the set of diskbounding simple closed curves is nowhere
dense in ${\cal P\cal M\cal L}$ (see \cite{M86} for details and note
that by standard 3-dimensional topology, the image of 
the fundamental group of $X_0$ in the fundamental group of $M_0$
can not be trivial as the genus of $X_0$ is at least two). 
Any  pseudo-Anosov element
$\phi$ whose pair of fixed points is contained in the complement
will do.

Since $\phi$ fixes the point $p$,
it acts on the fundamental group $\pi_1(X_0,p)$
of $X_0$, 
moreover it can be viewed as an 
element of $\Gamma$. We denote this element
of $\Gamma$ again by $\phi$.

The idea is now to conjugate the point-pushing map $\Psi(\gamma)$
by high powers $\phi^k$ of $\phi$.
The resulting mapping class is pseudo-Anosov, and 
$\phi^k\zeta$ is a quasi-axis for its action on 
${\cal C\cal G}(X_0)$, with all constants uniform in $k$.
As $\phi$ acts with north-south dynamics
on ${\cal P\cal M\cal L}$ 
and on the Gromov boundary of the curve graph
of $X_0$, by hyperbolicity of
${\cal C\cal G}(X_0)$, for large enough $k$ the 
quasi-axis $\phi^k\zeta$ for the action of 
$\phi^k\circ \Psi(\gamma)\circ \phi^{-k}$  
is arbitrarily far from the quasi-convex disk set.

Now let $\beta\subset X_0-\{p\}$ 
be a diskbounding curve near a quasi-axis of $\phi$. Since $\beta$
avoids $p$ we can view $\beta$ as a diskbounding curve in 
$X$. 
For each $k>0$ let $\beta_k$ be the image of 
$\beta$ under point-pushing along the 
curve $\phi^{k}(\gamma)$. Then $\beta_k$ is diskbounding,
moreover we have
\[\beta_k=(\phi^k\circ \Psi(\gamma)\circ \phi^{-k})(\beta).\]

By hyperbolicity of ${\cal C\cal G}(X)$, via
perhaps replacing $\gamma$ by a multiple 
(and hence replacing the point pushing map $\Psi(\gamma)$ by some power)
we may assume that a geodesic 
in ${\cal C\cal G}(X)$ connecting 
$\beta$ to $\beta_k$ is close in the Hausdorff topology to 
the composition of three arcs. The first arc connects
$\beta$ to the quasi-axis 
$\phi^k(\zeta)$ of $\phi^k\circ \Psi(\gamma)\circ \phi^{-k}$, 
the second arc travels along $\phi^k(\zeta)$, and the 
third arc connects $\phi^k(\zeta)$
to $\beta_k$. 

However, by the choice of $\phi$, for suitable choices of $k$ and 
suitable multiplicities of $\gamma$, such a curve
is arbitrarly
far in ${\cal C\cal G}(X)$ 
from the set of diskbounding curves.
\end{proof}

\bigskip\bigskip

\noindent
MATH. INSTITUT DER UNIVERSIT\"AT BONN\\
ENDENICHER ALLEE 60\\
53115 BONN\\
GERMANY\\

\bigskip\noindent
e-mail: ursula@math.uni-bonn.de

\end{document}